\theoremstyle{definition}
\numberwithin{equation}{section}
\theoremstyle{plain}
\newtheorem{thm}{Theorem}[section]
\newtheorem{lmm}[thm]{Lemma}
\newtheorem{cor}[thm]{Corollary}
\theoremstyle{definition}
\newtheorem{dfn}[thm]{Definition}
\newtheorem{rmk}[thm]{Remark}
\def\pmc#1{\setbox0=\hbox{#1}
    \kern-.1em\copy0\kern-\wd0
    \kern.1em\copy0\kern-\wd0}
\def\al{\alpha}
\def\be{\beta}
\def\ga{\gamma}
\def\Ga{\Gamma}
\def\de{\delta}
\def\vep{\varepsilon}
\def\la{\lambda}
\def\si{\sigma}
\def\vp{\varphi}
\def\om{\omega}
\def\op{\operatorname}
\def\ov{\overline}
\def\sm{\setminus}
\def\bs{\boldsymbol}
\begin{document}
\title[Making spaces wild (simply-connected case)]
{Making spaces wild \\ (simply-connected case)}
\author{Katsuya Eda}
\address{School of Science and Engineering, 
Waseda University, Tokyo 169-8555, JAPAN}
\email{eda@waseda.jp}
\begin{abstract}
We attach copies of the circle to points of a countable dense subset $D$ of a separable 
metric space $X$ and
 construct an earring space $E(X,D)$. We show that the fundamental group of $E(X,D)$ is isomorphic to a subgroup of the Hawaiian earring group, if the space $X$ is simply-connected and locally simply-connected. 
In addition if the space $X$ is locally path-connected, the space $X$ can be
 recovered from the fundamental group of $E(X,D)$.   
\end{abstract}
\keywords{wild space, fundamental group, Hawaiian earring}
\subjclass{55Q20, 55Q70, 57M05, 57M07, 20F34}

\maketitle
\section{Introduction and definitions}
In former papers \cite{E:spatial, E:embed, ConnerEda:information, E:cut, E:onedim, E:atom} we
investigated fundamental groups of wild spaces, i.e. spaces which are not
semi-locally simply connected. In \cite{ConnerEda:information}, we
defined a construction of spaces from groups and showed that certain
wild spaces, e.g. Sierpinski carpet, Menger sponge and their direct products, are recovered from their fundamental groups (This last
result is extended to a larger class of spaces \cite{E:cut}).

The roots of this investigation are in the following \cite[Theorem
1.3]{E:spatial}: 
\begin{quote}
Let $X$ and $Y$ be one-dimensional, locally path-connected, path-connected, 
metric spaces which are 
not semi-locally simply connected at any point. 
If the fundamental groups of $X$ and $Y$ are isomorphic, 
then $X$ and $Y$ are homeomorphic. 
\end{quote}
In the proof of this theorem we use the fundamental group of 
the Hawaiian earring to extract some topological information from groups. 
 Here, the Hawaiian earring is the plane continuum  
${\mathbb H} = \bigcup _{n=1}^\infty  C_n$ where $C_n = \{ (x,y):
(x+1/n)^2 + y^2 = 1/n^2\}$ and $o$ denotes the origin $(0,0).$  
The fundamental group $\pi _1({\mathbb H})$ is isomorphic to the free 
$\si$-product $\pmc{$\times$}\, \, \;_{n<\om}{\mathbb Z}_n$ \cite{E:free} and is called 
the Hawaiian earring group. 
In the present paper we attach infinitely many copies of the 
circle to a dense subset $D$ of a given simply-connected space $X$ 
and making all the points of $X$ to be non-semilocally simply connected 
in the resulting {\it earring} space $E(X,D)$. Then we'll show that by applying the construction of 
\cite{ConnerEda:information} to the fundamental group of $E(X,D)$ 
we recover the original space $X$. In particular, if $X$ is a separable connected manifold, 
$\pi_1(E(X,D))$ has the same information 
as that of the space $X$, since the choice of $D$ does not affect the space $E(X,D)$. 
The fundamental group of 
$E(X,D)$ is a subgroup of 
$\pmc{$\times$}\, \, \;_{d\in D}{\mathbb Z}_d$. Since $\pmc{$\times$}\, \, \;_{d\in D}{\mathbb Z}_d$ is isomorphic to the Hawaiian earring group, i.e. the fundamental group of the Hawaiian earring. we have many pairwise non-isomorphic subgroups of $\pi _1(\mathbb{H})$ which corresponding to simply-connected separable metric spaces. 
In the next paper \cite{E:earringspace2} 
we treat with non-simply-connected spaces and will have similar results, but in such cases $\pi _1(E(X,D))$ may not be isomorphic to a subgroup of the Hawaiian earring group. This paper only deals with simply-connected spaces $X$. For a non-simply-connected space $X$ the presentation of $\pi _1(E(X,D)$ becomes more complicated, we postpone such cases to the next paper.  
In the remaining part of this section we give preliminary definitions 
and the construction of the space $E(X,D)$. 
Let $X$ be a separable, metric space with its metric $\rho _X$ and 
$D$ be its countable subset of $X$. Since in the present paper $D$ is always dense in $X$, but the construction can be done generally and we shall use it in the next paper. 
We choose 
a point $o$ from the unit circle $\mathbb{S}^1$ and 
denote the metric on $\mathbb{S}^1$ by $\rho _C$, where the diameter of $\mathbb{S}^1$ is 
less than $1$. 
We fix the enumeration $\{ d_n | n\in \mathbb{N}\} = D$. 
We define metrics $\rho _x$ for $x\in X$ on $\{ x\} \times \mathbb{S}^1$ so that $\rho _x((x, u), (x,v)) = \rho _C(u,v)/n$, if $x=d_n$, and on $\{ x\} \times \{ o\}$ so that $\rho _x$ are the constant map $0$,  if 
$x\notin D$. 

Let $E(X,D)$ be a metric space 
space whose base set is 
$X\times\{ o\}\cup \{ d_n | n\in \mathbb{N}\}\times \mathbb{S}^1$ 
with the metric $\rho$ defined by: 
\[
\rho ((x,u),(y,v)) = 
\left\{ 
\begin{array}{ll}
\rho _x(u,v), &\mbox{ if }x=y\in D \\
\rho _X(x,y) + \rho _x(u,o)+ \rho _y(v,o), &\mbox{ otherwise}
\end{array}
\right.
\]
The diameter of a subset $Y$ of $E(X,D)$ is $\sup \{
\rho (x,y) : x,y\in Y\}$ and is denoted by $\op{diam}(Y)$. 
There are two important maps defined on $E(X,D)$. 
Let $r:E(X,D)\to X$ be the retraction defined by $r(x,u) = x$, where 
we identify $x$ and $(x,o)$. Another map is the metric quotient $\si$ obtained by regarding 
$X\times \{ o\}$ as one point $*$, i.e. the distance between $*$ and $(d_n,u)$ is 
$\rho _C(u,o)/n$. The space becomes homeomorphic to the Hawaiian earring. 
  
A path $f$ in a topological space $X$ is a continuous map from a closed
interval $[a,b]$ to $X$. We define $f^-:[a,b]\to X$ by: $f^-(s) =
f(a+b-s)$. 
Two paths $f$ and $g$ in $X$ are simply said to be {\it homotopic}, if
$f$ and $g$ have the same domain $[a,b]$ and are homotopic relative to
the end points $\{ a,b\}$. 
We use a set theoretic notion $\bigcup$ as follows: 
$\bigcup _{i\in I}X_i = \bigcup \{ X_i: i\in I\}$ and 
$\bigcup X = \{ u: u\in x \mbox{ for some }x\in X\}$. 
An ordinal number is the set of all ordinals less than itself. To simply
the notation, we also regard a natural number $m$ as the set $\{
0,1,\cdots ,m-1\}$. The least infinite ordinal is written as $\om$ and
is the set of all natural numbers. 
Since we use the symbol ``$(a,b)$ '' for an ordered pair, we write
$\langle a,b\rangle$ for the open interval between $a$ and $b$. 
We denote the empty sequence by $(\, )$. For a finite sequence $s$ and
an element $x$, the sequence obtained by attaching $x$ to $\si$ is
denoted by $s*(x)$. 
The unit interval $[0,1]$ is denoted by $\mathbb{I}$. 
Undefined notions are standard
\cite{Spanier:algtop}.
\begin{rmk}
There are many ways to make spaces wild. In our way we attach a circle to every point of a dense set. 
As a most explicit method we may attach copies of the Hawaiian earring  instead of circles. In such a 
case we cannot have Corollary~\ref{cor:subgroup}, i.e. $\pi _1(E(X,D))$ is not a subgroup of the Hawaiian earring group for simply-connected spaces. 

The countability of a $\pi _1 (X)$ is crucial for recovering the space $X$ from the fundamental group of the resulting space.  We explain this here. 
Any group can be realized by a connected 
simplicial complex. Let $X$ be such a space 
with its fundamental group isomorphic to that of the Sierpinski carpet and $E$ the resulting space by attaching circles or copies of the Hawaiian earring so that every point of $X$ becomes wild.  
Then, since $X$ is a retract of $E$, our construction produces a space which contains not only 
$X$, but the Sierpinski carpet in a complicated way. 

We also remark a similar construction to $E(X,D)$, i.e. attaching infinitely many circles, was done in \cite{EH:tree} to investigate reflections between group theoretic properties of the fundamental groups and set theoretic ones of uncountable trees.  
\end{rmk}

\section{infinitary words}
To investigate $\pi _1(E(X,D))$, we define letters and words. 
We use a notion of words of infinite length as in
\cite{CannonConner:hawaii1} and \cite{E:free,E:sigma}. 
Here we use this notion following  
\cite{CannonConner:hawaii1} and \cite[p.290]{E:sigma}, which is an
infinitary version of the presentation of free groups using generators,
while the notion in \cite{E:free} is an infinitary version of the
presentation of free products. 

Let $D$ be a set. Then, the letters consist of $a$ and $a^{-}$ for
$a\in D$ and we denote the set $\{ a^-: a\in D\}$ by $D^-$. 
The letter $a^{-}$ is a formal inverse for $a\in D$ and so $D\cap D^- =\emptyset$, but we identify $a^1$ and $(a^{-})^{-}$ with $a$.  

A {\it word} $W$ in $\mathcal{W}(D)$ is a map whose domain $\ov{W}$ is a linearly 
ordered set and $W(\al )\in D\cup D^-$
for $\al\in \ov{W}$ and $\{ \al\in\ov{W}: W(\al ) = a\}$  
is finite for each $a \in D\cup D^-$. 
The inverse word $W^{-}$ is defined as: 
$\ov{W^{-}} = \{ \al ~: \al \in \ov{W}\}$ and $\al ^-\le \be ^-$ if
$\be \le \al$ for $\al ,\be \in \ov{W}$ and $W^{-}(\al ^-) = W(\al )^-$. 

A section $S$ of a linearly ordered set is a subset of the
linearly ordered set such that for $\al ,\ga \in S$ $\al <\be <\ga$
implies $\be \in S$. 
A {\it subword} $V$ of a word $W$ is a restriction of $W$ to a section
of $\ov{W}$, i.e. $\ov{V}$ is a section of $\ov{W}$ and 
$V(\al ) = W(\al )$ for $\al \in \ov{V}$. 
For a section $S$ of $\ov{W}$ the subword obtained by restricting to $S$
is denoted by $W | S$. 
The concatenation of words are defined naturally. (We refer the reader
to \cite{E:free} for the precise definitions.)

A subword $V$ of $W$ is a head of $W$, if  
$V$ is non-empty and $W\equiv VX$ for some $X$, and similarly 
a subword $V$ of $W$ is a tail of $W$, if  
$V$ is non-empty and $W\equiv XV$ for some $X$. 

The equivalence between words in $\mathcal{W}(D)$ is defined as
 follows: 

For $W\in \mathcal{W}(D)$ and $F\Subset D$, 
$W_F$ is a word in $\mathcal{W}(D)$ such that $\ov{W_F} =
\{ \al \in \ov{W}: W(\al ) = a \mbox{ or }a^{-}
\mbox{ for some }a\in F\}$ and $W_F(\al )= W(\al )$. Since $W_F$ is a
 word of finite length, $W_F$ presents an element of the free group
 generated by $F$. Two words $V$ and $W$ in $\mathcal{W}(D)$ are
 equivalent, denoted by $V\sim W$ if $V_F = W_F$ as elements in the free 
 group generated by $F$ for each $F\Subset D$. 
We denote the group consisting of all the equivalence class of words in
$\mathcal{W}(D)$ with the concatenation of words as the multiplication 
by $\pmc{$\times$}\, \, \; ^{\sigma}_{a\in D}\mathbb{Z}_a$, where
$\mathbb{Z}_a$'s are copies of the integer group. For the group
theoretic properties of this group we refer the reader to \cite{E:free,E:atom,
EK:hawaii}. 

Since words $V,W\in \mathcal{W}(D)$ presents elements of a group
$\pmc{$\times$}\, \, \; ^{\sigma}_{a\in D}\mathbb{Z}_a$, we simply write
$V = W$ when $V$ and $W$ are equivalent. 
We write $V\equiv W$, when $V$ and $W$ are the same as words, i.e. there
exists an order-preserving bijection $\vp: \ov{V} \to \ov{W}$ such that
$V(\al ) = W(\vp (\al ))$ for $\al\in \ov{V}$. 
As in the case of words of finite length, reduced words are important. A word $W$ is reduced, 
if any non-empty subword of $W$ is not equivalent to the empty word. 
\begin{lmm}~\label{lmm:reducedword2} 
For a word $W\in \mathcal{W}(D)$, there exists a unique reduced word $W_0\in \mathcal{W}(D)$ 
which is equivalent to $W$. 
\end{lmm}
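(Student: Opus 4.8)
The plan is to reduce both existence and uniqueness to a single combinatorial characterization of the empty word. Call a set $M$ of pairwise disjoint two-element subsets of $\ov{W}$ an \emph{inverse matching} if $W(\beta)=W(\alpha)^-$ for every pair $\{\alpha,\beta\}\in M$, and call $M$ \emph{non-crossing} if for any two of its pairs the closed intervals they span are nested or disjoint. A non-crossing inverse matching covering all of $\ov{W}$ I call \emph{total}; totality automatically forces $M$ to be \emph{complete between its pairs}, i.e. for $\{\alpha,\beta\}\in M$ with $\alpha<\beta$ every position of $\langle\alpha,\beta\rangle$ is again matched inside $\langle\alpha,\beta\rangle$. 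The key lemma I would isolate is: $W=(\,)$ if and only if $\ov{W}$ admits a total non-crossing inverse matching. The easy direction is immediate, since restricting such a matching to the positions carrying generators from any $F\Subset D$ leaves a balanced finite word, which is trivial in the free group on $F$. Granting the lemma, everything else is bookkeeping about nested interval systems on linearly ordered sets.

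For existence I would take a maximal \emph{cancellation}, meaning a non-crossing inverse matching that is complete between its pairs, maximal under inclusion; Zorn applies, since the union of a chain of such matchings is again one. Let $Z$ be the set of matched positions and let $W_0$ be the word on $\ov{W}\setminus Z$ with the induced order and $W_0(\alpha)=W(\alpha)$. Completeness splits $Z$ into its maximal spanned intervals, each totally matched, and for any $F\Subset D$ the $F$-positions of each such interval form a contiguous balanced block of $W_F$; deleting these blocks does not change $W_F$ in the free group on $F$, so $W_0\sim W$. To see $W_0$ is reduced, suppose a non-empty subword $W_0|S\sim(\,)$ and apply the key lemma to get a total non-crossing inverse matching $N$ of $S$. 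Since the $M$-intervals contain no unmatched position, while convexity of $S$ among the unmatched positions fills the interval spanned by each $N$-pair with $M\cup N$, one checks that $M\cup N$ is again a cancellation strictly larger than $M$, contradicting maximality.

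For uniqueness, suppose $V$ and $W$ are reduced with $V\sim W$. Then $VW^-=(\,)$, so the key lemma furnishes a total non-crossing inverse matching $M$ of $\ov{VW^-}=\ov{V}+\ov{W^-}$. No pair of $M$ can lie entirely in the initial section $\ov{V}$: such a pair would span a totally matched interval inside $\ov{V}$, hence (easy direction) a non-empty null subword of $V$, against reducedness; likewise for $\ov{W^-}$. Thus every pair joins a position of $\ov{V}$ to one of $\ov{W^-}$, giving a bijection $\varphi\colon\ov{V}\to\ov{W}$, and unwinding $W^-(\alpha^-)=W(\alpha)^-$ shows $V(\alpha)=W(\varphi(\alpha))$. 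Finally, since $\ov{V}$ entirely precedes $\ov{W^-}$, non-crossing forces any two pairs with left feet $\alpha_1<\alpha_2$ in $\ov{V}$ to be nested, so their right feet occur in reverse order in $\ov{W^-}$, i.e. in the original order in $\ov{W}$; hence $\varphi$ is order-preserving and $V\equiv W$.

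The hard part is the nontrivial direction of the key lemma: that a null word admits a total matching. This is precisely where the standing requirement that each letter occur only finitely often is indispensable, for without it one could write unmatchable ``null'' strings. I would attack it by choosing a maximal non-crossing inverse partial matching, arguing that the subword carried by the unmatched positions is still null, and then deriving a contradiction from any surviving generator: it occurs there a nonzero but balanced number of times, producing an innermost inverse pair that can be adjoined to the matching. Legitimizing the ``innermost'' choice for an arbitrary order type of $\ov{W}$, and coping with positions that have no immediate neighbour, is the genuinely delicate point and the step I expect to absorb most of the effort.
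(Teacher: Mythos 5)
Your overall architecture is sound and in fact largely parallels machinery the paper develops anyway: your ``key lemma'' is exactly Lemma~\ref{lmm:equivalent} (Cannon--Conner), your existence argument via a maximal cancellation whose unmatched residue is reduced is exactly Lemma~\ref{lmm:reducedword1}, and your uniqueness argument (a total non-crossing inverse matching on $VW^-$ must match $\ov{V}$ against $\ov{W^-}$, in an order-reversing way, since a pair lying inside $\ov{V}$ would bound a non-empty null subword of the reduced word $V$) is correct --- and is not even in the paper, which simply cites \cite[Theorem 1.4]{E:free} for the whole lemma. The genuine gap is the step you flag yourself but do not close: the hard direction of the key lemma. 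Your plan --- take a maximal non-crossing inverse partial matching, observe that the unmatched subword is still null, and adjoin an ``innermost inverse pair'' --- cannot be carried out as stated. Because of the coverage requirement (item (2) of Definition~\ref{dfn:noncrossing}, your ``complete between its pairs''), a single pair $\{p,q\}$ can be adjoined only if every position strictly between $p$ and $q$ is already matched, i.e.\ only if $p$ and $q$ are adjacent among the unmatched positions. But a non-empty null word need have no adjacent positions at all: start with $a\,a^-$ and, at every finite stage, insert a fresh block $b\,b^-$ into every gap of the current word. The limit word is null (each finite projection equals a finite projection of some stage, which reduces to the empty word), each letter occurs exactly twice, and its domain is densely ordered. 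On this word the empty matching admits no one-pair extension whatsoever, although it is far from maximal: it extends to the natural total nested pairing, but only by adding infinitely many pairs at once. So a maximal-but-not-total matching cannot be refuted by producing a single adjoinable pair; the innermost pair you want simply need not exist.

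Moreover, the obvious repair is circular: the only evident way to extend a non-total matching in such a situation is to adjoin a total matching of the null unmatched subword, which is precisely the assertion being proved. The paper escapes this circle by a different mechanism in its proof of Lemma~\ref{lmm:equivalent}: well-order the alphabet $D$ and run a transfinite induction over generators, at step $\nu$ pairing the finitely many occurrences of $\nu$ by regarding the relevant null subword as a word in the free product of $\mathbb{Z}_\nu$ with the free $\sigma$-product over the remaining letters, and taking a finite reduction there. The per-letter finiteness built into $\mathcal{W}(D)$ --- the hypothesis you rightly call indispensable --- is thus exploited generator by generator rather than position by position, and that is what copes with arbitrary order types of $\ov{W}$. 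Until you supply an argument of this shape (or cite \cite{CannonConner:hawaii1} or \cite[Theorem 1.4]{E:free} for the characterization), your proof is incomplete at its load-bearing step.
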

A proof is an easy corollary to \cite[Theorem 1.4]{E:free} and we omit it. The difference of terminology 
is explained after \cite[Definition 4.3]{E:sigma}.

To define the above equivalence directly and combinatorially Cannon and Conner
\cite[Definition 3.4]{CannonConner:hawaii1} introduced a notion ``complete noncrossing inverse pairing''. 
We use this thinking to construct a homotopy and it is particularly important for the non-simply-connected case \cite{E:earringspace2}.  Since our notation is
somewhat different and we have a different purpose, we define 
only notions ``noncrossing pairing' and ``noncrossing inverse pairing'''.

\begin{dfn}\label{dfn:noncrossing}
A {\it noncrossing pairing} $\Ga$ on a linearly ordered set $S$ is the
 following:
\begin{itemize}
\item[(1)] each element of $\Ga$, called a $\Ga$-{\it pair}, is a pair of elements of $S$, i.e. a
	   subset of $S$ of the cardinality exactly $2$; 
\item[(2)] $x\cap y = \emptyset $ for distinct $\Ga$-pairs $x,y$ and, 
   	   for a $\Ga$-pair $\{ \al, \be\}$, $\{ \ga :\al \le 
	   \ga\le \be\}\subseteq \bigcup \Ga$; 
\item[(3)]  $\be _0 <\al _1$ or $\be _1 < \al _0$ for distinct $\Ga$-pairs $\{ \al _0, \be _0\}, 
	   \{ \al _1,\be _1\}$ satisfying 
	   $\al _0 < \be _0$ and $\al _1<\be _1$, 
	  
\end{itemize}
A {\it noncrossing inverse pairing} $\Ga$ on a word $W\in \mathcal{W}(D)$ 
is a noncrossing pairing on $\ov{W}$ such that $W(\al _0) = W(\al _1)^{-}$ 
for each $\Ga$-pair $\{ \al _0,\al _1\}$. 
Let $\preceq$ be a partial ordering on $\Ga$ defined by: 
$\{ \al _0, \be _0\}\preceq \{ \al _1,\be _1\}$ if $\al _1\le \al _0$
 and $\be _0\le \be _1$. 

Let $\Ga$ be a noncrossing inverse pairing on a word $W$. A subword $V$
 of $W$ is {\it bound} by $\Ga$, if for every $\al \in \ov{V}$ there exists a $\Ga$-pair $\{ \be ,\ga\}$ 
such that $\be \le \al \le \ga$. 
\end{dfn}
The first lemma follows from the definition and the second is
straightforward and we omit their proofs.
\begin{lmm}\label{lmm:bound}
Let $\Ga$ be a noncrossing inverse pairing on a word $W$ and $U$ and $V$
 be subwords of $W$ such that 
\begin{itemize}
\item[(1)] $\al < \be$ for $\al\in \ov{U}$ and $\be\in \ov{V};$
\item[(2)] the subword $W'$ of $W$ restricted to $\{ \al\in \ov{W}: 
	   \be _0\le \al \le \be _1\mbox{ for some }\be _0\in \ov{U},
	   \be _1\in \ov{V}\}$ is bound by $\Ga ;$
\item[(3)] for each $\al\in \ov{U}$ there exist $\al '\in \ov{U}$ and
	   $\be\in \ov{V}$ such that $\al \le \al '$ and $\{ \al ',\be\}$ is 
	   a $\Ga $-pair and similarly 
	   for each $\be\in \ov{V}$ there exist $\be '\in \ov{V}$ and
	   $\al\in \ov{U}$ such that $\be ' \le \be$ and $\{ \al ,\be '\}$ is a 
	   $\Ga.$-pair. 
\end{itemize}
Then, the restriction of $\Ga$ to a word $UV$ is a non-crossing inverse
 pairing. 
\end{lmm}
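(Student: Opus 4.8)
The plan is to verify the three clauses of Definition~\ref{dfn:noncrossing} for the restriction $\Ga'$ of $\Ga$ to $UV$, where $\Ga'$ consists of exactly those $\Ga$-pairs both of whose members lie in $\ov U\cup\ov V$. First I would dispose of everything except convexity. The underlying order of the word $UV$ is the order on $\ov U\cup\ov V$ induced from $\ov W$, and $\Ga'\subseteq\Ga$; hence the inverse condition $W(\al_0)=W(\al_1)^{-}$, the disjointness of distinct pairs, and the noncrossing condition are all inherited verbatim, since passing to the sub-order $\ov U\cup\ov V$ and deleting the letters lying strictly between $U$ and $V$ can neither create a crossing nor alter any letter. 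Thus the whole content of the lemma is the convexity clause of Definition~\ref{dfn:noncrossing}: for each $\Ga'$-pair $\{\al,\be\}$ with $\al<\be$, every $\ga\in\ov U\cup\ov V$ with $\al\le\ga\le\be$ must again be paired by $\Ga'$.

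So fix such a pair $\{\al,\be\}$ and such a $\ga$. Because the order on $UV$ is induced from $\ov W$, we have $\al\le\ga\le\be$ in $\ov W$ as well, and $\al,\be\in\ov U\cup\ov V\subseteq[\min\ov U,\max\ov V]$, so $\ga$ lies in this span. By hypothesis~(2) the span is bound by $\Ga$, whence $\ga\in\bigcup\Ga$ (equivalently, one may quote the convexity clause of Definition~\ref{dfn:noncrossing} for the pair $\{\al,\be\}$ directly). Let $\{\ga,\de\}$ be the $\Ga$-pair containing $\ga$; since $\ga$ lies between $\al$ and $\be$, the noncrossing property forces its partner $\de$ to lie between $\al$ and $\be$ too, so $\de\in[\min\ov U,\max\ov V]$. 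The point to be proved therefore reduces to the single statement that $\de$ does not fall into the deleted middle block $G=\{\eta\in\ov W:\max\ov U<\eta<\min\ov V\}$; once this is shown, $\de\in\ov U\cup\ov V$, whence $\{\ga,\de\}\in\Ga'$ and $\ga\in\bigcup\Ga'$.

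The main obstacle is exactly this gap-avoidance, and it is where hypothesis~(3) is indispensable. Suppose $\ga\in\ov U$ and, for contradiction, $\de\in G$, so $\ga<\de$. Apply the first clause of~(3) to $\ga$: there is a $\Ga$-pair $\{\ga',\be'\}$ with $\ga\le\ga'\in\ov U$ and $\be'\in\ov V$. If $\ga'=\ga$, then $\{\ga,\be'\}$ and $\{\ga,\de\}$ share $\ga$, hence coincide, giving $\de=\be'\in\ov V$, contrary to $\de\in G$. If $\ga<\ga'$, then $\ga'\in\ov U$ and $\de\in G$ give $\ga<\ga'<\de$, while $\de\in G$ and $\be'\in\ov V$ give $\de<\be'$; thus $\{\ga',\be'\}$ has one endpoint $\ga'$ strictly inside the interval $(\ga,\de)$ and the other endpoint $\be'$ strictly beyond $\de$, which is precisely a crossing with $\{\ga,\de\}$ and contradicts that $\Ga$ is noncrossing. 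Hence $\de\notin G$. The case $\ga\in\ov V$ is symmetric, using the second clause of~(3) to produce a $\Ga$-pair $\{\al',\ga'\}$ with $\al'\in\ov U$ and $\ga'\le\ga$ in $\ov V$, and deriving the analogous crossing.

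Finally I would record that when $\{\al,\be\}$ lies entirely in $\ov U$ (or entirely in $\ov V$) the argument is immediate: $\ov U$ is a section, so $\ga$ and its partner $\de$ both lie in $[\al,\be]\subseteq\ov U$ and no appeal to~(3) is needed. Thus hypotheses~(2) and~(3) are invoked only for the straddling pairs with $\al\in\ov U$ and $\be\in\ov V$, where the removed block $G$ actually intervenes. This completes the verification of the convexity clause, and with the inherited clauses it yields the lemma.
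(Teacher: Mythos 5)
Your proof is correct, and since the paper gives no proof of this lemma at all (it is dismissed with the remark that ``the first lemma follows from the definition''), your direct verification from Definition~\ref{dfn:noncrossing} is exactly the argument being alluded to: the inverse, disjointness and noncrossing clauses pass to the restriction automatically, and the real content is the convexity clause, which you establish by placing the partner $\de$ of $\ga$ inside $[\al ,\be ]$ via noncrossing and then excluding the gap $G$ between $\ov{U}$ and $\ov{V}$ via hypothesis (3) and a crossing contradiction; both crossing arguments and the case of pairs lying entirely in $\ov{U}$ or $\ov{V}$ check out. One step needs repair: the inference ``the span is bound by $\Ga$, whence $\ga \in \bigcup \Ga$'' misreads the paper's notion of \emph{bound} --- being bound by $\Ga$ means only that every element lies weakly between the two members of \emph{some} $\Ga$-pair, not that it is itself paired --- so hypothesis (2) does not yield $\ga \in \bigcup \Ga$, and your ``equivalently'' is not an equivalence. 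Fortunately your parenthetical alternative is the correct argument: $\{ \al ,\be \}$ is itself a $\Ga$-pair, so the convexity clause of Definition~\ref{dfn:noncrossing}, applied to $\Ga$ as a pairing on $\ov{W}$, gives $\ga \in \bigcup \Ga$ directly; with that reading your proof is complete, and as a by-product hypothesis (2) is never actually used, only (1), (3) and the fact that $\Ga$ is a noncrossing inverse pairing on $W$. Note also that you read ``noncrossing'' as forbidding interleaved pairs while allowing nested ones; clause (3) of Definition~\ref{dfn:noncrossing} as literally written would forbid nesting as well, but your reading is clearly the intended one, being the one used in the proof of Lemma~\ref{lmm:equivalent}.
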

\begin{lmm}\label{lmm:easy}
Let $W_\mu$ be subwords of a word $W$ for $\mu <\la$
\begin{itemize}
\item[(1)] If $\ov{W_\mu}$ is closed under $\Ga$ and $W_\mu$ is a subword of $W_\nu$ for $\mu \le \nu$, then $\bigcup _{\mu <\la}W_\mu$ is a subword of
	   $W$ and $\bigcup _{\mu <\la}\ov{W_\mu}$ is closed under $\Ga$. 
\item[(2)] If $\ov{W_\mu}$ is closed under $\Ga$ and $W_\nu$ is a subword of $W_\mu$ for $\mu \le \nu$, then $\bigcap _{\mu <\la}W_\mu$ is a subword of
	   $W$ and $\bigcap _{\mu <\la}\ov{W_\mu}$ is closed under $\Ga$. 
\end{itemize}
\end{lmm}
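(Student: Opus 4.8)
The plan is to separate each of the two assertions into its two halves: that the union (resp.\ intersection) of the domains $\ov{W_\mu}$ is again a section of $\ov{W}$, so that restricting $W$ to it is by definition a subword, and that this union (resp.\ intersection) is again closed under $\Ga$. Throughout I read ``$\ov{W_\mu}$ is closed under $\Ga$'' in the obvious way: whenever $\{\al,\be\}$ is a $\Ga$-pair with $\al\in\ov{W_\mu}$, then also $\be\in\ov{W_\mu}$. Note first that any restriction $W\res S$ with $S\subseteq\ov{W}$ automatically inherits the letter-finiteness condition from $W$, so the only thing to verify for the subword claims is that the relevant index set is a section and that the values match, which they do because each $W_\mu$ equals $W\res\ov{W_\mu}$.

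For (1), I would first check the section property. Let $\al,\ga\in\bigcup_{\mu<\la}\ov{W_\mu}$ with $\al<\be<\ga$, and pick $\mu_0,\mu_1$ with $\al\in\ov{W_{\mu_0}}$, $\ga\in\ov{W_{\mu_1}}$. Since $W_\mu$ a subword of $W_\nu$ for $\mu\le\nu$ gives $\ov{W_\mu}\subseteq\ov{W_\nu}$, both $\al$ and $\ga$ lie in $\ov{W_{\max(\mu_0,\mu_1)}}$, which is a section, so $\be$ lies there too, hence in the union. Thus $\bigcup_{\mu<\la}W_\mu=W\res\bigcup_{\mu<\la}\ov{W_\mu}$ is a restriction of $W$ to a section and therefore a subword. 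For closedness, if $\{\al,\be\}$ is a $\Ga$-pair with $\al\in\bigcup_{\mu<\la}\ov{W_\mu}$, choose $\mu$ with $\al\in\ov{W_\mu}$; closedness of $\ov{W_\mu}$ gives $\be\in\ov{W_\mu}\subseteq\bigcup_{\mu<\la}\ov{W_\mu}$.

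For (2), the section property is even more direct: if $\al,\ga\in\bigcap_{\mu<\la}\ov{W_\mu}$ and $\al<\be<\ga$, then $\be\in\ov{W_\mu}$ for every $\mu$ because each $\ov{W_\mu}$ is a section, whence $\be\in\bigcap_{\mu<\la}\ov{W_\mu}$; so again $\bigcap_{\mu<\la}W_\mu=W\res\bigcap_{\mu<\la}\ov{W_\mu}$ is a subword. For closedness, if $\{\al,\be\}$ is a $\Ga$-pair with $\al\in\bigcap_{\mu<\la}\ov{W_\mu}$, then $\al\in\ov{W_\mu}$ for each $\mu$, so closedness gives $\be\in\ov{W_\mu}$ for each $\mu$, i.e.\ $\be\in\bigcap_{\mu<\la}\ov{W_\mu}$.

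I do not expect a genuine obstacle here — the statement is combinatorial bookkeeping. The only place where the direction of the chain is actually used is the convexity step in part (1): an arbitrary union of sections can fail to be convex, and the increasing hypothesis is exactly what places any two chosen points into a common $\ov{W_\mu}$ to rule this out. Everything else, namely closedness under $\Ga$ in both parts and the section property in part (2), follows purely from the pointwise definitions and needs no chain condition at all.
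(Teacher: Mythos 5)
Your proof is correct, and it fills in exactly the kind of routine verification the paper has in mind: the paper explicitly omits the proof of this lemma, calling it straightforward, so there is no authorial argument to diverge from. Your reading of ``closed under $\Ga$'' is the intended one, and your observation that the chain hypothesis is only needed for convexity of the union in part (1) -- while the section property in (2) and $\Ga$-closedness in both parts hold for arbitrary families -- is accurate bookkeeping.
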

The next lemma is contained in \cite[Section 3]{CannonConner:hawaii1}
implicitly, but we present its proof for the explicitness. 
Though we apply this to words of countable domains, i.e. $\si$-words
\cite{E:free}, we prove this for a general case. 
\begin{lmm}\label{lmm:equivalent}
$($\mbox{Cannon-Conner }\cite{CannonConner:hawaii1}$)$
For a word $W\in \mathcal{W}(D)$, $W = e$, i.e. $W$ is equivalent to the
 empty word, if and only if there exists a noncrossing inverse pairing $\Ga$ 
 on $W$ such that $\bigcup \Ga = \ov{W}$. 
\end{lmm}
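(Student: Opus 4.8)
The plan is to use the defining property that $W=e$ means $W_F$ is trivial in the free group on $F$ for every $F\Subset D$, and to transfer noncrossing inverse pairings back and forth between $W$ and its finitely generated restrictions $W_F$. The crucial elementary observation is that an inverse pairing never links positions carrying different generators: if $\{\alpha_0,\alpha_1\}$ is a $\Gamma$-pair then $W(\alpha_0)=W(\alpha_1)^{-}$, so both positions involve the same $a\in D$. Consequently, for any $F\Subset D$ each $\Gamma$-pair lies entirely inside $\overline{W_F}$ or entirely outside it, and the restriction $\Gamma_F$ of $\Gamma$ to $\overline{W_F}$ is again a noncrossing inverse pairing; moreover if $\bigcup\Gamma=\overline W$ then $\bigcup\Gamma_F=\overline{W_F}$, since the $\Gamma$-partner of any position in $\overline{W_F}$ again lies in $\overline{W_F}$, which also yields condition (2) for $\Gamma_F$. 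Thus both directions rest on the finite case: \emph{for a word $w$ of finite length, $w=e$ in the free group iff $w$ carries a noncrossing inverse pairing $\Gamma$ with $\bigcup\Gamma=\overline w$}. I would prove this by induction on $|\overline w|$, using conditions (2) and (3) to locate a $\preceq$-minimal pair $\{\alpha,\beta\}$: minimality together with the solidity condition (2) forces $\alpha,\beta$ to be adjacent, so $W(\alpha)W(\beta)$ is a cancelling pair whose deletion reduces $w$ and its pairing simultaneously.

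For the direction ($\Leftarrow$) this is then immediate: given a complete noncrossing inverse pairing $\Gamma$ on $W$, the restriction $\Gamma_F$ is a complete noncrossing inverse pairing on the finite word $W_F$ for every $F\Subset D$, so the finite case gives $W_F=e$; as this holds for all $F$, by definition $W=e$.

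For ($\Rightarrow$), suppose $W=e$. For each $F\Subset D$ the finite case provides a nonempty finite set $\mathcal P_F$ of complete noncrossing inverse pairings on $W_F$, and by the observation above restriction sends $\mathcal P_{F'}\to\mathcal P_F$ whenever $F\subseteq F'$, compatibly; so $(\mathcal P_F)_{F\Subset D}$ is an inverse system of nonempty finite sets over the directed set of finite subsets of $D$. Its inverse limit is nonempty, and a thread $(\Gamma_F)_F$ gives a coherent family whose union $\Gamma:=\bigcup_F\Gamma_F$ is a well-defined set of pairs on $\overline W=\bigcup_F\overline{W_F}$. Conditions (1), (3) and the inverse condition are finitary and hold because they already hold in some $\Gamma_F$, and completeness holds since every position lies in some $\overline{W_F}$. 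The one genuinely infinitary point—and the main obstacle—is condition (2), whose interval $\{\gamma:\alpha\le\gamma\le\beta\}$ may be infinite: I would verify it by choosing $F$ large enough to contain the generators of $W(\alpha)$ and of a given intermediate $W(\gamma)$, so that $\{\alpha,\beta\}$ is already a $\Gamma_F$-pair and condition (2) for the genuine complete pairing $\Gamma_F$ places $\gamma$ in $\bigcup\Gamma_F\subseteq\bigcup\Gamma$. The same passage to a sufficiently large finite $F$ underlies the coherence of the thread and the well-definedness of $\Gamma$ as a pairing, so the whole argument turns on the compactness furnished by the finiteness of each $\mathcal P_F$ together with the fact that each generator occurs only finitely often in $W$.
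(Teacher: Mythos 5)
Your argument is correct, but it reaches the hard direction by a genuinely different route than the paper. The easy direction (a complete noncrossing inverse pairing forces $W=e$) is essentially identical in both: restrict $\Gamma$ to $\overline{W_F}$ using the observation that a $\Gamma$-pair never joins positions of different generators, and quote the finite free-group fact. For the converse, however, the paper well-orders $D$ as an ordinal and builds $\Gamma$ by transfinite recursion, one generator at a time: at a successor stage it forms a maximal subword $V$ not separated by previously chosen pairs, deletes the parts already bound to obtain $U=e$, regards $U$ as a word in the free product of $\mathbb{Z}_\nu$ with the group on the remaining generators, and pairs the occurrences of $\nu$ according to a cancellation; the invariant that the subword strictly between any chosen pair is itself equivalent to $e$ (the paper's property (5)) is what keeps the recursion going. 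You instead turn the sets $\mathcal{P}_F$ of complete noncrossing inverse pairings of the finite words $W_F$ into an inverse system of nonempty finite sets over the directed poset of $F\Subset D$ (nonempty by the finite case, finite because $\overline{W_F}$ is finite, coherent because restriction preserves completeness), extract a thread by compactness, and take its union; completeness of the union is automatic since every position lies in some $\overline{W_F}=\bigcup\Gamma_F$, and the remaining conditions follow by pushing any two pairs into a common $\Gamma_{F''}$. Your route buys a shorter proof with almost no bookkeeping, whose only non-finitary input is the standard fact that a directed inverse limit of nonempty finite sets is nonempty (both proofs use choice anyway --- yours via this compactness, the paper's via well-ordering $D$); the paper's recursion buys an explicit construction whose auxiliary notions (subwords closed under or bound by $\Gamma$, property (5), Lemma~\ref{lmm:easy}) are reused in Lemma~\ref{lmm:reducedword1} and in the homotopy constructions of Section 3, which is presumably why the author proves it that way. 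One point to tidy: your induction sketch for the finite case (adjacency of a $\preceq$-minimal pair) proves only the implication from pairing to triviality; the opposite finite implication, which you need for $\mathcal{P}_F\neq\emptyset$, requires its own (equally easy) induction --- delete an adjacent cancelling pair of a trivial word, pair the remainder inductively, and re-insert the deleted pair, noting that an adjacent pair cannot cross any other pair.
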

\begin{proof}
Let $\Ga$ be a noncrossing pairing on $W$ satisfying $\bigcup \Ga =\ov{W}$, then for each 
$F\Subset D$ the restriction 
$\Ga _F = \Ga |\{ \al \in \ov{W} \, |\, W(\al ) = d \text{ or } d^- \text{ for } d\in F \}$ to $W_F$
 is also a noncrossing inverse pairing $\Ga _F$ on $W_F$ such that 
$\bigcup \Ga _F =  \{ \al \in \ov{W} \, |\, W(\al ) = d \text{ or } d^- \text{ for some} d\in F\}$. Therefore, $W_F = e$ in the free
 product $\ast _{d\in F}\mathbb{Z}_d$ and hence $W = e$. 

To show the converse, let $W = e$. We define a 
noncrossing pairing on $\ov{W}$ inductively as follows. 
We well-order $D$ and so identify $D$ with an ordinal number $\la$. 
We define $\Ga _\mu$ inductively. 
In the $\mu$-step we have defined $\Ga _\nu$ $(\nu < \mu )$ such that 
\begin{itemize}
\item[(1)] each element of $\Ga _\mu$ is a pair of elements of $\ov{W}$; 
\item[(2)] $x\cap y = \emptyset $ for distinct $x,y\in \Ga _\mu$ and 
	   $\bigcup \Ga _\mu = \{ \al\in\ov{W}: W(\al ) \in \mu \mbox{ or }
	   W(\al )^{-} \in \mu \}$;
\item[(3)] for distict pairs $\{ \al _0, \al _1\}, 
	   \{ \be _0,\be _1\}\in \Ga _\mu$ with 
	   $\al _0<\al _1$ and $\be _0<\be _1$, $\al _0 < \be _0$
	   implies $\al _1 <\be _0$ or $\be _1< \al _1$; 
\item[(4)] $W(\al _0) = W(\al _1)^{-}$ for $\{ \al _0,\al _1\}\in \Ga _\mu$; 
\item[(5)] $W | \; \langle\al _0, \al _1\rangle = e$ for each $\{ \al _0, \al _1\} 
	   \in \Ga _\mu$.  
\end{itemize}
We remark that $\Ga _0$ is the empty map by definition. 
When $\mu$ is a limit ordinal, let $\Ga _\mu = 
\bigcup _{\nu <\mu}\Ga _\nu$. Then $\Ga _\mu$ satisfies the required
 properties. 

Let $\mu = \nu +1$. If $\nu$ nor $\nu ^{-}$ does not appear in $W$, we let
 $\Ga _\mu = \Ga _\nu$. Otherwise we extend $\Ga _\nu$ as follows. 

Let $\be _0 < \cdots <\be _m$ be the enumeration of $\{ \al \in \ov{W}:
 W(\al ) =  \nu \mbox{ or } \nu ^{-} \}$. 
For $\be _i$, we have a maximal subword $V$ of $W$ such that 
$\be _i\in \ov{V}$ and $\ov{V} \cap \{ \al_0,\al _1 : \{ \al _0,\al _1\}\in \Ga _\nu , \al _0 < \be _i < \al _1 \} = \emptyset$. 
Then $\ov{V} = \bigcap \{ \langle\al _0,\al _1\rangle : \{ \al _0,\al _1\}\in \Ga _\nu , 
\al _0 < \be _i < \al _1 \}$ and hence $V = e$ by the property (5) of
 $\Ga _\nu$ and Lemma~\ref{lmm:easy}(2). 
We define a word $U$ as the restriction of $V$ to 
\[
 \ov{U} = \{ \be\in\ov{V}: \be\notin [\al _0,\al _1] \mbox{ for any } 
\al _0,\al _1\in \ov{V}\cap \bigcup \Ga _\nu \}. 
\]
Since $U$ is obtained by deleting possibly infinitely many subwords of
 $V$ equivalent to the empty word, we have $U = e$. 
We extend $\Ga _\nu$ on $\{ \be _j: \be _j\in \ov{U}, 0\le j\le m\}$. We
 remark that $\be _i\in \ov{U}$ and we obtain the same $U$ if we start
 from $\be _j\in \ov{U}$. 
We can regard $U$ as a word for the free product $\mathbb{Z}_{\nu}\ast 
\pmc{$\times$}\, \, \; ^{\sigma}_{\xi \ge \mu}\mathbb{Z}_\xi$. 
Since $U = e$, we have a reduction as a word for this free product and
 have a pairing according to the cancellation of $\nu$ and $\nu ^{-}$. (Of
 course the reduction is not unique and so we choose one of them.) 
Now, it is easy to see that the resulting pairing satisfies the required
 properties and we can extend $\Ga _\nu$ on $\{ \al \in \ov{W}:
 W(\al ) =  \nu \mbox{ or }\nu ^{-} \}$ and let $\Ga _\mu$ to be the
 extension. 
We have $\Ga _\la$ as the desired noncrossing inverse pairing. 
\end{proof}
\begin{lmm}~\label{lmm:reducedword1}
Let $W\in \mathcal{W}(D)$, $\Ga$ a maximal noncrossing inverse pairring and 
$\ov{W_0} = \{ \al \in \ov{W} \, |\, \al \notin \bigcup \Ga\}$ and $W_0 = W \, |\, \ov{W_0}$. 
Then, $W_0 = W$ and $W_0$ is reduced. 
\end{lmm}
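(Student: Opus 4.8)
The plan is to treat the two assertions separately, using Lemma~\ref{lmm:equivalent} as the engine for ``a bound subword is trivial'' and the maximality of $\Gamma$ as the engine for reducedness.

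First I would prove $W_0 = W$ by exhibiting the $\Gamma$-paired part as a disjoint union of subwords each equivalent to the empty word. Consider the $\preceq$-maximal $\Gamma$-pairs; for such a pair $\{\alpha_0,\alpha_1\}$ with $\alpha_0<\alpha_1$, condition~(2) of Definition~\ref{dfn:noncrossing} gives $\{\gamma:\alpha_0\le\gamma\le\alpha_1\}\subseteq\bigcup\Gamma$, while condition~(3) forces every $\Gamma$-pair that meets this interval to be contained in it (a straddling pair would cross $\{\alpha_0,\alpha_1\}$). Hence the section $B=\{\gamma:\alpha_0\le\gamma\le\alpha_1\}$ is a subword bound by $\Gamma$ on which $\Gamma\res B$ is a noncrossing inverse pairing with $\bigcup(\Gamma\res B)=\overline{B}$, so $W\res B = e$ by Lemma~\ref{lmm:equivalent}. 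These outermost blocks are pairwise disjoint (by $\preceq$-maximality and condition~(3)) and their union is exactly $\bigcup\Gamma$, while the letters of $\overline{W_0}$ lie outside all of them; thus $W$ is, in order, an interleaving of the single letters of $W_0$ and the blocks $B$, and $W_0$ is obtained from $W$ by deleting all the $B$'s. To conclude $W_0=W$ I would check equality after restriction to each $F\Subset D$: only finitely many blocks meet $\overline{W_F}$, each contributes a factor $(W\res B)_F=e$ in the free group on $F$, so $W_F=W_{0F}$; since $F$ is arbitrary, $W_0=W$.

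For reducedness, suppose toward a contradiction that some non-empty subword $V$ of $W_0$ satisfies $V=e$. By Lemma~\ref{lmm:equivalent} there is a noncrossing inverse pairing $\Delta$ on $V$ with $\bigcup\Delta=\overline{V}$, and since $\overline{V}\subseteq\overline{W_0}=\overline{W}\setminus\bigcup\Gamma$ the pairs of $\Delta$ are disjoint from those of $\Gamma$. I would then argue that $\Gamma\cup\Delta$ is again a noncrossing inverse pairing on $W$, which is impossible because it properly extends the maximal $\Gamma$. The point is that $\overline{V}$ is a section of $\overline{W_0}$, so between two consecutive elements of $\overline{V}$ the only extra letters of $\overline{W}$ are whole $\Gamma$-blocks $B$, with no further $\overline{W_0}$-letter intervening. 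Consequently each block sits either strictly inside the span of a $\Delta$-pair or strictly outside it, so no $\Delta$-pair crosses a $\Gamma$-pair, giving condition~(3); and between the endpoints of any $\Delta$-pair every element of $\overline{W}$ lies in $\overline{V}\subseteq\bigcup\Delta$ or in some block $\subseteq\bigcup\Gamma$, giving condition~(2). The inverse-letter requirement $W(\alpha_0)=W(\alpha_1)^{-}$ is inherited from $\Gamma$ and $\Delta$ separately.

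The routine parts are the finite-$F$ bookkeeping in the first paragraph, which is the same device already used in the proof of Lemma~\ref{lmm:equivalent}. The main obstacle I expect is the geometric verification that $\Gamma\cup\Delta$ remains noncrossing and, especially, interval-filling in the sense of condition~(2): this rests entirely on the facts that $\overline{V}$ is order-convex inside $\overline{W_0}$ and that each $\Gamma$-block is order-convex and meets $\overline{W_0}$ trivially. I would therefore isolate the structural statement ``every $\preceq$-maximal $\Gamma$-block is a section disjoint from $\overline{W_0}$'' (an immediate consequence of conditions~(2)--(3) together with Lemma~\ref{lmm:easy}) and establish it first, after which both conditions for $\Gamma\cup\Delta$ follow by a short case analysis on the relative position of a block and a $\Delta$-pair.
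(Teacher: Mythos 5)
The gap is in your block decomposition. Both halves of your argument rest on the claim that every $\Ga$-pair lies under a $\preceq$-maximal one, so that the spans of the $\preceq$-maximal pairs (your ``outermost blocks'') cover $\bigcup \Ga$. But $\preceq$-maximal pairs need not exist: nothing prevents an infinite strictly $\preceq$-increasing chain of pairs whose spans are cofinal in $\ov{W}$. Concretely, let $\ov{W}=\{\ldots ,-2,-1\}\cup \{ 1,2,\ldots \}$ with order type $\om ^*+\om$, let $W(-n)=a_n$ and $W(n)=a_n^{-}$, and let $\Ga = \{ \{ -n,n\} : n\ge 1\}$. Each letter occurs exactly twice, so $W\in \mathcal{W}(D)$; the pairs of $\Ga$ are pairwise nested (hence noncrossing in the intended sense, the same reading your proof uses), condition (2) of Definition~\ref{dfn:noncrossing} holds, and $\Ga$ is maximal because $\bigcup \Ga = \ov{W}$. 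Yet $\Ga$ has no $\preceq$-maximal element, so the union of your outermost blocks is empty while $\bigcup \Ga$ is everything; in particular your assertion that ``$W$ is, in order, an interleaving of the single letters of $W_0$ and the blocks'' fails here ($W_0$ is empty, $W$ is not), and the finite-restriction computation you hang on that decomposition never gets started. The same phantom blocks reappear in your second paragraph (``the only extra letters of $\ov{W}$ are whole $\Ga$-blocks''), so as written the verification that $\Ga \cup \De$ is noncrossing also has a hole.

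Both defects are repairable, and the repairs show which ideas were actually needed. For reducedness --- the only half the paper itself proves --- no blocks are needed at all: if a $\Ga$-pair $\{ \be _0,\be _1\}$ crossed a $\De$-pair, then one element of that $\De$-pair would lie strictly between $\be _0$ and $\be _1$, hence in $\bigcup \Ga$ by condition (2), contradicting $\bigcup \De = \ov{V}\subseteq \ov{W_0}$; condition (2) for $\Ga \cup \De$ follows from the same observation together with the fact that $\ov{V}$ is a section of $\ov{W_0}$. This is exactly the paper's (very terse) argument: a trivial subword of $W_0$ permits an extension of $\Ga$ via item (2), contradicting maximality. For $W_0=W$, which the paper leaves unargued, replace spans of maximal pairs by the order-convex components of $\bigcup \Ga$ (classes of the relation ``every element of $\ov{W}$ between the two lies in $\bigcup \Ga$''): each component $C$ is a section of $\ov{W}$, is closed under $\Ga$ by condition (2), and is covered by $\Ga | C$, so $W|C = e$ by Lemma~\ref{lmm:equivalent}; since a finite set $\ov{W_F}$ meets only finitely many components, and meets each one in a convex subset, $W_F = (W_0)_F$ in the free group on $F$ for every $F\Subset D$, i.e.\ $W=W_0$. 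These components always exist, unlike your maximal pairs, and they play exactly the role you wanted the blocks to play.
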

\begin{proof}
Suppose that $W_0$ is not reduced. Then, there exist $\al < \be$ in $W_0$ such that 
$W_0| \{ \ga \in \ov{W} \, |\, \ga \notin \bigcup \Ga , \al \le \ga \le \be \} = e$. 
By the item (2) in Definition~\ref{dfn:noncrossing}, we can extend $\Ga$ to $\Ga '$ such that 
$\al , \be \in \bigcup \Ga '$, which contradicts the maximality of $\Ga$. 

\end{proof}
For a given countable dense subset $D$ of $X$ we use words in 
$\mathcal{W}(D)$. For each $d\in D = \{ d_n | n\in \mathbb{N}\} $, 
let $C_d$ be the circle attached at $d$ and 
 $w_d$ a letter corresponding to a fixed winding on the circle $C_d$. 

Define $\op{supp}(w_d) = \op{supp}(w_d^{-}) = d$ and for a word $W\in
\mathcal{W}(D)$ let $\op{supp}(W)$ be the set 
$\{ \op{supp}(W(\al )) : \al\in \ov{W}\}$.   

Two paths with domain $[a,b]$ are simply said to be homotopic, if they are homotopic relative 
to $\{ a,b\}$. 
\begin{dfn}
A path $f:[a,b]\to E(X,D)$ is a {\it proper} path, if 
the following hold: 
$f(a),  f(b) \in X$ and if $f(\langle u,v\rangle )\cap X = \emptyset$ and
$f(u)=f(v)\in X$, then $f|[u,v]$ is a winding or reverse-winding on 
the circle $C_d$. 

For a proper path $f$, let $\bigcup _{i<\nu}\langle a_i,b_i\rangle  
= f^{-1}(E(X,D)\sm X)$ where $\nu \le \om$ and $\langle a_i,b_i\rangle
\cap \langle a_j,b_j\rangle 
= \emptyset$ for $i\neq j$. We define $W^f$ to be a word in 
$\mathcal{W}(D)$ such that 
$\ov{W^f} = \{ \langle a_i,b_i\rangle  : i<\nu\}$ with the natural
ordering induced from the one on the real line and 
$W^f (\langle a_i,b_i\rangle ) = w_d$ if
$f|[a_i,b_i]$ is the fixed winding on $C_d$ and 
$W^f (\langle a_i,b_i\rangle ) =
w_d^{-}$ otherwise, i.e. $f|[a_i,b_i]$ is the reverse winding on 
$C_d$. 
\end{dfn}

Recall the metric quotient map $\si$ of $E(X,D)$ to the Hawaiian earring $\mathbb{H}$, where each circle corresponds to $C_d$ for $d\in D$, then $\si _*$ is regarded 
as a homomorphism from $\pi _1(E(X,D))$ to $\pmc{$\times$}\, \, \; _{d\in D}\mathbb{Z}_d$. 
It is easy to see that $\si _*([f]) = [W^f]$ for a proper loop $f$ in $E(X,D)$ with base point $x_0\in X$. 
A word $W\in \mathcal{W}(D)$ is {\it realizable}, if there exists a proper path $f$ in $E(X,D)$ such that $W\equiv W^f$. 

\begin{lmm}\label{lmm:proper}
Let $f:[a,b]\to E(X,D)$ be a path with $f(a), f(b) \in X$.  
Then there exists a proper path homotopic to $f$.
\end{lmm}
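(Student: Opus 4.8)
The plan is to analyse $f$ on the open set $f^{-1}(E(X,D)\sm X)$, replace its behaviour on each connected component by a standard string of windings, and then verify that the patched map and homotopy stay continuous.

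First I would write $f^{-1}(E(X,D)\sm X)=\bigcup_{i<\nu}\langle a_i,b_i\rangle$ as the disjoint union of its (at most countably many) connected components, $\nu\le\om$. Since $E(X,D)\sm X=\bigsqcup_{d\in D}(C_d\sm\{d\})$ is a disjoint union of relatively clopen connected pieces (for $d\ne d'$ any point of $C_d\sm\{d\}$ and any point of $C_{d'}\sm\{d'\}$ are at $\rho$-distance at least $\rho_X(d,d')>0$), the connected image $f(\langle a_i,b_i\rangle)$ lies in a single $C_d\sm\{d\}$. As $a_i,b_i\notin f^{-1}(E(X,D)\sm X)$ or are the endpoints $a,b$, we have $f(a_i),f(b_i)\in X\cap C_d=\{d\}$, so each restriction $f_i:=f\,|\,[a_i,b_i]$ is a loop in the circle $C_d$ based at $d$.

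The key quantitative input is a finiteness statement: for every $\ep>0$ the set $\{i:\op{diam}(\op{im} f_i)\ge\ep\}$ is finite, so along any sequence of pairwise distinct components the diameters $\op{diam}(\op{im} f_i)$ tend to $0$. Indeed, were there infinitely many components of oscillation $\ge\ep$, being pairwise disjoint subintervals of $[a,b]$ they would accumulate at some $t^*$ and their lengths would tend to $0$; choosing in each such component two points whose $f$-images are $\ep/2$ apart, both points would converge to $t^*$, contradicting the continuity of $f$. This observation is exactly what will rescue continuity after the surgery below.

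Next I would perform the surgery. On $C_d\cong\mathbb{S}^1$ we have $\pi_1(C_d,d)\cong\mathbb{Z}$; let $k_i\in\mathbb{Z}$ be the winding number of $f_i$. Define $g_i:[a_i,b_i]\to C_d$ to be the constant loop at $d$ when $k_i=0$, and otherwise the concatenation of $|k_i|$ consecutive copies of the fixed winding (if $k_i>0$) or of the reverse winding (if $k_i<0$), each returning to $d$ in between; then $f_i$ and $g_i$ are homotopic rel $\{a_i,b_i\}$ through a homotopy $H_i$ whose image lies in $C_d$. Setting $g=f$ and $H(\cdot,s)=f$ off $\bigcup_i\langle a_i,b_i\rangle$, and $g=g_i$, $H=H_i$ on each $[a_i,b_i]$, gives a candidate path $g$ and a candidate homotopy $H$ rel $\{a,b\}$ from $f$ to $g$. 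By construction the maximal subintervals of $g$ mapping off $X$ are precisely the single windings just produced, so $g$ is a \emph{proper} path.

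The main obstacle is continuity of $g$ and $H$, which can fail only at a point $t\in f^{-1}(X)$ that is a limit of infinitely many pairwise distinct components $\langle a_{i_k},b_{i_k}\rangle$. Here I would use that $\op{im} H_i\subseteq C_{d_i}$, whence $\op{diam}(\op{im} H_i)\le\op{diam}(C_{d_i})=\op{diam}(\op{im} f_i)$ when $k_i\ne0$ (a nonzero winding is surjective onto $C_{d_i}$), while when $k_i=0$ the null-homotopy obtained by lifting to the universal cover $\mathbb{R}$ and contracting linearly keeps the image within the range of $f_i$, so again $\op{diam}(\op{im} H_i)\le\op{diam}(\op{im} f_i)$. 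For such a sequence the components are disjoint and accumulate, so their lengths vanish and $a_{i_k}\to t$, giving $d_{i_k}=f(a_{i_k})\to f(t)$; moreover $\op{diam}(\op{im} f_{i_k})\to0$ by the finiteness above. Hence both $g$ and $H(\cdot,s)$ stay within $\op{diam}(\op{im} f_{i_k})$ of $d_{i_k}$ on these components and therefore converge to $f(t)=g(t)=H(t,s)$; on components lying in $f^{-1}(X)$ nothing changed, and continuity at the $a_i,b_i$ is immediate since each $H_i$ is taken rel endpoints with value $d_i$. This shows $g$ is a proper path homotopic to $f$.
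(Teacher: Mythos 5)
Your proof is correct and follows essentially the same route as the paper's: decompose $f^{-1}(E(X,D)\setminus X)$ into its component intervals, homotope each excursion $f|[a_i,b_i]$ within its circle $C_d$ to a standard form, and patch the homotopies together. The paper's (very terse) proof leaves implicit exactly the size-control argument you spell out via the finitely-many-large-oscillations observation; note also that since the interior of each excursion avoids $d$, the winding number $k_i$ is automatically in $\{-1,0,1\}$, so your general-$k_i$ construction, while harmless, never actually occurs.
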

\begin{proof}
Let $f^{-1}(E(X,D)\sm X) = \bigcup _{i<\nu }\langle a_i,b_i\rangle $, where
 $\langle a_i,b_i\rangle \cap \langle a_j,b_j\rangle  =\emptyset$ for
 distinct $i,j$ and $\nu \le \om$.  
Then $f|[a_i,b_i]$ is homotopic to the constant, a winding, or a reverse
 winding on the circle at $d\in D$. 
In case $f|[a_i,b_i]$ is homotopic to the constant, we construct a
 homotopy $H$ so that the image of $H\, |\, [a_i,b_i]\times \mathbb{I}$ is in
 some $C_d$ for each $i$. Then we have a proper path
 homotopic to $f$.
\end{proof}

\begin{dfn}\label{dfn:Gamma-bound} Let $f:[a,b]\to E(X,D)$ be a proper path satisfying $f(a), f(b)\in X$ and $W^f = e$  and 
$\Ga$ be a non-crossing inverse pairing on $W^f$. An open subinterval $\langle u, v\rangle$ of 
$[a,b]$ is $\Ga$-{\it bound}, if for any $\vep >0$ there exists  a $\Ga$-pair $\{ \langle a_0, b_0\rangle , \langle a_1, b_1\rangle\}$ such that $u\le a_0 <u+\vep$ and $v-\vep < b_1\le v$. 
A maximal $\Ga$-bound is maximal among $\Ga$-bounds under inclusion. 
A subinterval $[c,d]$ of $[a,b]$ is {\it closed} under $\Ga$, if $f(c)=f(d)\in X$ and 
$W^{f|[c,d]}$ is closed under $\Ga$. 
\end{dfn}

\section{simply-connected case}

Our construction of homotopy is involved in techniques of wild topology. To clarify the construction 
we introduce a definition involved with punctured homotopies and other notions. 
A partial continuous map $H:[a,b]\times [c,d]\to X$ is {\it bound for constant}, if 
$\op{dom}(H)$ contains $\partial ([a,b]\times [c,d])$ and 
$H(s,c)= H(a,t)=H(b,t)$ for $a\le s\le b$ and $c\le t\le d $. A punctured homotopy $H$ is from a loop $f$ to constant, if  $f(s) = H(s,d)$ for $a\le s\le b$.

A {\it punctured homotopy} $H$ is a partial continuous map bound for constant 
satisfying: 
\begin{itemize}
\item[(1)] the domain of $H$ is of form $[a,b]\times [c,d] \sm \bigsqcup _{i\in I}R_i$ where $R_i$ are 
open subrectangles of $[a,b]\times [c,d]$ and $R_i$ are pairwise disjoint; and 
\item[(2)] the restriction of $H$ to each $\partial R_i$ is bound for constant. 
\end{itemize}
We remark that the domain of a punctured homotopy $H$ contains $\partial ([a,b]\times [c,d])$, when we write $H:[a,b]\times [c,d]\to X$, but $\op{dom}(H)$ may not contain the whole $[a,b]\times [c,d]$. 
The {\it size} of a map is the diameter of the range of the map. 
 
A simplest example of a punctured homotopy  is a partial homotopy $H$ bound for constant such that 
$\op{dom}(H) =\partial ([a,b]\times [c,d])$. 

We use the following agreement for the equality of partial maps, which is common in set theory: 
for a partial map $f$ and a set $A$, $\op{dom}(f|A) = \op{dom}(f)\cap A$ and $f|A(x) =f(x)$ for 
$x\in \op{dom}(f|A)$. 

A family $\mathcal{F}$ of loops in a metric space $X$ is {\it admissible}, 
if the following conditions are satisfied. 
\begin{itemize}
\item[(1)]
For any $f\in \mathcal{F}$ and $\de >0$ there exists a punctured homotopy $H$ from $f$ to 
constant such that $\op{diam}(H(\partial R_i)) < \de$ and $H|\partial R_i$ is a punctured homotopy from some $g\in \mathcal{F}$ to constant; 
\item[(2)] For each $\vep >0$ there exists $\de >0$ such that: for 
any $f\in \mathcal{F}$ whose size is less than $\de >0$ and any $\al >0$ there exists a punctured homotopy $H$ from $f$ bound for constant 
whose size is less than $\vep$, $\op{diam}(H(\partial R_i)) < \al$
and $H|\partial R_i$ is a punctured homotopy from some $g\in \mathcal{F}$ to constant for any $R_i$ 
related rectangle to $H$. 
\end{itemize}
\begin{lmm}\label{lmm:main1}
If there exists a complete metric subspace of a space $X$ containing the ranges of all loops in a given  admissible family, 
then every loop in the admissible family is null-homotopic. 
\end{lmm}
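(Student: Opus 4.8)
The plan is to build the required null-homotopy by filling in, level by level, the holes left by the punctured homotopies supplied by admissibility, and then to pass to a limit at the ``wild'' points, where completeness does the essential work. First I would fix a summable sequence $\vep_n = 2^{-n}$ and, for each $n\ge 1$, use admissibility~(2) to obtain a threshold $\de_n>0$ such that any loop in $\mathcal{F}$ of size less than $\de_n$ admits a punctured homotopy to constant of size less than $\vep_n$ and with arbitrarily prescribed control $\al$ on the diameters of its boundary loops. The holes will be organized along a tree of finite sequences: to $s=(\,)$ I assign the given loop $f$ and the square $Q=[a,b]\times[c,d]$, and to each nonempty $s$ I assign an open subrectangle $R_s$ together with a loop $f_s\in\mathcal{F}$ read off from the boundary datum of $H$ on $\partial R_s$.

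For the recursion I would first apply admissibility~(1) to $f$ with $\de=\de_1$, producing a punctured homotopy $H_{(\,)}$ from $f$ to constant whose holes $R_{(i)}$ carry boundary loops $f_{(i)}\in\mathcal{F}$ of size less than $\de_1$; here condition~(1) is exactly what lets me start from an arbitrary, possibly large, loop. Thereafter, given $s$ of length $n\ge 1$ with $f_s\in\mathcal{F}$ of size less than $\de_n$, I apply admissibility~(2) with $\vep=\vep_n$ and $\al=\de_{n+1}$ to obtain $H_s$, a punctured homotopy from $f_s$ to constant of size less than $\vep_n$ whose holes $R_{s*(i)}$ carry boundary loops $f_{s*(i)}\in\mathcal{F}$ of size less than $\de_{n+1}$. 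Because each $H_s$ is bound for constant and $H_s\,|\,\partial R_{s*(i)}$ is by construction the boundary datum of the child $H_{s*(i)}$ (top edge $f_{s*(i)}$, the remaining edges constant), the partial maps agree wherever they meet, so their union $H$ is a well-defined partial map whose domain is $Q$ minus the set $L$ of points lying in an infinite nested chain of open holes.

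It then remains to extend $H$ continuously across $L$ and to check it is a genuine null-homotopy. A point $p\in L$ determines a unique descending chain $R_{s_1}\supset R_{s_2}\supset\cdots$, since the holes on each level are pairwise disjoint, and for $n\ge 2$ the basepoint $x_n$ of $f_{s_n}$ lies in the range of $H_{s_{n-1}}$, whence $\rho(x_{n-1},x_n)<\vep_{n-1}$; thus $(x_n)_n$ is Cauchy. Each $x_n$ lies in the range of a loop of $\mathcal{F}$, hence in the hypothesized complete subspace, so I define $H(p)$ to be its limit $x^{*}$ there, and this is the one place completeness is indispensable. For continuity I would establish, for every $n\ge 1$ and every $p'\in R_{s_n}$, the one-sided estimate $\rho(H(p'),x_n)\le\sum_{k\ge n}\vep_k$ by tracking how far into the tree one descends from $p'$; since $R_{s_n}$ is an open neighborhood of $p$ and the tail $\sum_{k\ge n}\vep_k$ tends to $0$, this yields continuity at $p$, while continuity off $L$ is immediate from that of the individual $H_s$. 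Finally, as the holes avoid $\partial Q$, the top edge of $H$ is the top edge of $H_{(\,)}$, namely $f$, and the rest of $\partial Q$ is constant, so $H$ witnesses that $f$ is null-homotopic.

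The step I expect to be the main obstacle is precisely the continuity argument at the wild points $L$: the size bookkeeping must be arranged tightly enough ($\vep_n$ summable, $\de_n$ extracted from admissibility~(2), and $\al=\de_{n+1}$ fed forward) that the basepoints are Cauchy and the one-sided estimate holds, and completeness must be invoked to place the limit value $x^{*}$ inside $X$. The gluing and the totality of the domain are comparatively routine, but they rest on the fact that every punctured homotopy is bound for constant on the boundary of each of its holes, which is what makes consecutive levels match.
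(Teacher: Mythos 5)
Your proposal is correct and takes essentially the same route as the paper: one application of admissibility~(1) at the root, then repeated applications of~(2) with the hole-boundary control $\alpha$ fed forward as the next smallness threshold $\delta_{n+1}$, producing a tree of punctured homotopies indexed by nested rectangles whose union is then extended over the residual set of points lying in infinitely many nested holes, with completeness of the subspace supplying the limit values. If anything your bookkeeping is tighter than the paper's: the paper takes the stage-$n$ sizes below $1/n$, which is not summable, whereas your choice $\varepsilon_n = 2^{-n}$ is what actually makes the Cauchy estimate for the basepoints and the tail-sum continuity bound at the residual set go through; your only equally terse spot is the assertion that continuity off $L$ is ``immediate,'' which at a point that is an accumulation point of infinitely many holes still requires your tail-sum estimate combined with the $\delta_{n+1}$-control on the hole-boundary images --- a step the paper's own proof passes over just as quickly.
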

\begin{proof} 
First we outline the proof roughly, then proceed to state a rigorous proof. 
Using (2), we have $\de _n>0$ such that any $g\in \mathcal{F}$ whose size is less than $\de _n$ for any $\al$ there exists a punctured homotopy $G$ from $g$ to constant 
whose size is less than $1/n$ such that $\op{diam}(G(\partial R_i)) < \al$
and $G|\partial R_i$ is a punctured homotopy from some $g\in \mathcal{F}$ to constant. 
Let $f\in \mathcal{F}$. Using (1), we have a punctured homotopy $H$ from $f$ to constant 
such that $G|\partial R_i$ is a punctured homotopy from some $g\in \mathcal{F}$ to constant and the size of $G|\partial R_i$ is less than $\de _1$. 
After the $n$-step we have constructed punctured homotopies $H$ so that the sizes of $H|\partial R_i$ are less than $\de _n$ for each $R_i$ related to $H$. In the $n\! +\! 1$-stage we work 
for each $H|\partial R_i$ and construct a punctured homotopy $G$ extending $H|\partial R_i$ 
so that the sizes of  $G$ is less than $1/n$ and also 
the sizes of the the restriction of $G$ to the boundary of each related rectangle is less than 
$\de _{n+1}$. 
The union of all punctured homotopies, we have a continuous map from a dense subset of 
$[0,1]\times [0,1]$ to $X$. Undefined parts are points or intervals which are the intersections of rectangles of intermediate stages. Then, the ranges of punctured homotopies converge by the given condition of the ranges of loops in $\mathcal{F}$. Therefore, we have a homotopy from $f$ to constant, 
i.e. $f$ is null-homotopic. 

Now we start a rigorous proof. Let $Seq$ be the set of all finite sequences $(R_0,\cdots ,R_n)$ of open subrectangles of $[0,1]\times [0,1]$ such that $\ov{R_{i+1}}\subseteq R_i$. 
We define subsets $S_n$ of $Seq$ whose elements are of their length $n$ and punctured homotopies $H_s$ for $s\in S_n$, where 
$(\, )$ is the empty sequence. For $s=((R_0,\cdots ,R_n)\in Seq$, let $s_* = R_n$. 
Let 
$\op{dom}(H_{(\, )}) = \partial ([0,1]\times [0,1])$ and $H_{(\, )}$ the punctured homotopy from a given $f\in \mathcal{F}$ to constant. 
For the $n\! +\! 1$-stage we have defined $S_n$ and $H_s$ for $s\in S_n$. Let $R$ be an open rectangle related to the punctured homotopy $H_s$. 
We recall that the restriction of $H_s$ to $\partial R$ is a punctured homotopy bound for constant
whose size is less than $\de _n$. By (2) we have a punctured homotopy bound for constant of size less than $1/n$ extending $H_s|\partial R$. By adjusting the domain to a subset of $R$ we have a punctured homotopy $H_{s*(R)}$ so that the size of the restriction of $H_{s*(R)}$ to the boundary of 
each open rectangle is less than $\de _{n+1}$. Let $S_{n+1}$ be the set of all such $s\! *\! (R)$ for 
$s\in S_n$. 

After all $n$-stages are over we may have an element $x$ in $[0,1]\times [0,1]$ on which any $H_s$ is not defined. This happens only when there exist infinite sequence $s$ through $\bigcup _n S_n$, i.e. 
$s = (R_0, \cdots , R_n, \cdots )$ and $\ov{R_n}\subseteq R_{n+1}$ and $x\in R_n$ for every $n$. 
Since the sizes of the restriction of punctured homotopies to $\partial R_n$ converges to $0$ and 
the condition of an admissible class, we have a convergent point of the ranges of the maps to which 
we extend the union of punctured homotopies by mapping $x$ to the point. Now we have a homotopy from $f$ to constant. 
\end{proof}

In comparison with proofs of \cite[Theorem
 A.1]{E:free}, \cite[Theorem 3.2]{E:sigma}, \cite[Appendix
 B]{EK:injection} and \cite[Theorem 1.1]{E:edge}, 
the difficulty in our
 case lies in controlling a homotopy in the simply connected space $X$
 and here a noncrossing inverse pairing works.

For a proper loop $f:\mathbb{I}\to E(X,D)$ with $f(0)= f(1) = x$, we have a
 word $W^f$. To show that this correspondence induces an injective
 homomorphism from the fundamental group $\pi _1(E(X,D),x)$ to 
$\pmc{$\times$}\, \, \; ^{\sigma}_{d\in D}\mathbb{Z}_d$, 
it suffices to show that $W^f = e$ if and only if $f$ is
 null-homotopic. 

\begin{thm}\label{thm:main1}
Let $X$ be a simply connected, locally simply connected metric space and
 $x\in X$ and $D$ a countable dense subset of $X$. Then, the metric quotient map $\si$ induces 
an injective homomorphism $\si _*$. 
Consequently, $\pi _1(E(X,D),x)$ is isomorphic to a subgroup of 
$\pmc{$\times$}\, \, \; ^{\sigma}_{d\in D}\mathbb{Z}_d$, i.e. isomorphic to a subgroup of the Hawaiian earring group. 
\end{thm}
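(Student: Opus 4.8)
The plan is to establish the equivalence stated just above the theorem: for a proper loop $f\colon \mathbb{I}\to E(X,D)$ based at $x$, one has $W^f=e$ if and only if $f$ is null-homotopic. Granting this, injectivity of $\si_*$ follows at once, since $\si$ is continuous, so $\si_*$ is a homomorphism with $\si_*([f])=[W^f]$ on proper loops; every class has a proper representative by Lemma~\ref{lmm:proper}; and the equivalence says precisely that the kernel of $\si_*$ is trivial, whence $\pi_1(E(X,D),x)$ is isomorphic to the image, a subgroup of $\pmc{$\times$}\, \, \; ^{\sigma}_{d\in D}\mathbb{Z}_d$. The implication ``null-homotopic $\Rightarrow W^f=e$'' is immediate from functoriality, so the entire content is the converse.

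So assume $W^f=e$. By Lemma~\ref{lmm:equivalent} I fix a noncrossing inverse pairing $\Ga$ on $W^f$ with $\bigcup\Ga=\ov{W^f}$, matching every winding of $f$ with an inverse winding on the same circle. I then take as admissible family $\mathcal{F}$ the collection of all loops $f|[c,d]$ for which $[c,d]$ is closed under $\Ga$ in the sense of Definition~\ref{dfn:Gamma-bound}; for such $[c,d]$ we have $f(c)=f(d)\in X$ and $W^{f|[c,d]}$ closed under $\Ga$, so $f|[c,d]$ is a loop and $f$ itself belongs to $\mathcal{F}$. The ranges of all members of $\mathcal{F}$ lie in the compact, hence complete, set $f(\mathbb{I})$, so Lemma~\ref{lmm:main1} will deliver the null-homotopy of $f$ once $\mathcal{F}$ is shown to be admissible.

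To check admissibility I decompose a loop $g=f|[c,d]\in\mathcal{F}$ along its maximal $\Ga$-bounds $\langle u_k,v_k\rangle$: off these bounds $g$ is $X$-valued, and collapsing each bound to its base point $f(u_k)=f(v_k)$ yields a loop in the simply connected space $X$, which contracts there. Lemmas~\ref{lmm:bound} and~\ref{lmm:easy}, together with the noncrossing property, guarantee that the restriction of $\Ga$ to each $[u_k,v_k]$ is a noncrossing inverse pairing covering $\ov{W^{f|[u_k,v_k]}}$, so each bound is closed under $\Ga$ and contributes a rectangle $R_k$ with $H|\partial R_k$ a punctured homotopy from $f|[u_k,v_k]\in\mathcal{F}$ to constant. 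For condition~(1) I must furthermore make $\op{diam}(H(\partial R_k))<\de$. Here I use that $C_{d_n}$ has diameter $<1/n$ and each letter occurs only finitely often in $W^f$, so only finitely many windings have diameter $\ge\de$; each such winding and its $\Ga$-mate is removed by the free homotopy conjugating the enclosed loop along the path formed by the outer $X$-arc and the winding, the returning $X$-arc cancelling its partner because $X$ is simply connected and the two windings cancelling on their common circle. What survives are holes sitting at the accumulation loci of $\Ga$, where the circle sizes tend to $0$; being short in parameter they have diameter $<\de$ by uniform continuity of $f$. Condition~(2) is the local statement: by local simple connectedness together with compactness of $f(\mathbb{I})\cap X$, for each $\vep$ there is $\de$ so that every loop in $X$ of diameter $<\de$ bounds a homotopy of diameter $<\vep$; since $r$ is non-expanding, a $g\in\mathcal{F}$ of size $<\de$ retracts to such a small $X$-loop, all of its windings being small, and the holes can be taken of diameter $<\al$ by passing to loops deep in the $\Ga$-nesting.

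The main obstacle is condition~(1) for a loop of large diameter: a maximal $\Ga$-bound is not a single pair but in general an increasing union of infinitely many nested pairs whose spanning windings shrink to the endpoints, so one must cancel infinitely many winding pairs at once while contracting the coarse $X$-scaffolding, and then verify that the residual holes really are closed-under-$\Ga$ loops of controllably small diameter located where the circles degenerate to a point. Keeping the intermediate loops inside $\mathcal{F}$ throughout (via the noncrossing bookkeeping of Lemmas~\ref{lmm:bound} and~\ref{lmm:easy}) and ensuring the resulting punctured homotopies glue continuously is exactly the delicate point; it is precisely this situation that Lemma~\ref{lmm:main1} is built to resolve, reducing the global construction to the two local conditions above while the convergence of the shrinking boundary loops inside the complete set $f(\mathbb{I})$ supplies the homotopy at the undefined limit points.
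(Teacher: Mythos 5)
Your overall architecture is the same as the paper's: reduce to showing that $W^{f}=e$ forces $f$ to be null-homotopic, extract a noncrossing inverse pairing $\Ga$ via Lemma~\ref{lmm:equivalent}, build a family $\mathcal{F}$ of loops tied to closure under $\Ga$, verify admissibility, and invoke Lemma~\ref{lmm:main1}. However, the proposal has a genuine gap exactly at the point that constitutes the bulk of the paper's proof: verifying admissibility condition (1) for loops of \emph{large} size. Your mechanism for such loops is to cancel the finitely many windings of diameter $\ge \de$ against their $\Ga$-mates and to claim that the surviving holes are ``short in parameter,'' hence of diameter $<\de$ by uniform continuity. This does not work: a maximal $\Ga$-bound can have large diameter even though \emph{every} winding inside it is tiny, because the diameter comes from travel of $f$ through $X$ between small circles (for instance $W^{f|[u,v]}\equiv w_{d}w_{e}w_{e}^{-}w_{d}^{-}$ with $f$ crossing a large portion of $X$ between the occurrences of $w_{d}$ and $w_{e}$, all circles being small). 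After your finitely many cancellations the residual holes can still have size comparable to that of $f$ itself, and nothing in your argument shrinks them. The paper handles precisely this by an induction on $m$ with the size of the loop bounded by $m\de_0$ ($\de_0$ from uniform continuity of $f_0$): using the defining property of a $\Ga$-bound (Definition~\ref{dfn:Gamma-bound}) it finds a $\Ga$-pair whose intervals come within $\de_0$ of the endpoints, cancels that single pair on its common circle, splits off the outer remnant (which has size $<\de$ by uniform continuity), and recurses on the strictly shorter middle portion; this recursion is what you are missing. Your closing assertion that this difficulty ``is precisely the situation that Lemma~\ref{lmm:main1} is built to resolve'' is circular: that lemma \emph{assumes} admissibility as a hypothesis and converts it into a null-homotopy; it cannot be used to establish condition (1), which must be proved for every member of $\mathcal{F}$, including the original large loop.

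A second, related defect: your $\mathcal{F}$ consists only of restrictions $f|[c,d]$ to intervals closed under $\Ga$. But any construction along these lines unavoidably leaves holes whose top loop is a \emph{collapse} of $f$ --- the loop obtained by replacing $f$ on a closed-under-$\Ga$ interval by the constant value at its endpoints (the outer remnant $f_2$ in the paper's Case 0 and Case 1). Condition (1) requires every hole boundary to be a punctured homotopy from some member of $\mathcal{F}$ to constant, so the family must contain these collapsed loops as well; this is why the paper's definition of $\mathcal{F}$ has two generating operations (restriction \emph{and} collapse), iterated. Your sketch even uses such collapses when contracting the $X$-scaffolding, but they are not elements of the family you defined, so the admissibility verification could not close up even for small loops.
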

\begin{proof}
It suffices to show that $\si _*$ is injective. 
Suppose that $\si _*(f_0)=e$ for a proper loop $f_0$. Then, it implies $W^{f_0} = e$. 
Now it suffices to show that $W^{f_0} = e$ implies that $f$ is null-homotopic. 
We define an admissible class containing $f_0$ the ranges of whose elements are subset of the range of 
$f_0$. Since the range of $f_0$ is compact, the assumptions of Lemma~\ref{lmm:main1} is satisfied and 
we conclude that $f_0$ is null-homotopic. 
Since $W^{f_0} = e$, we have a noncrossing inverse pairing $\Ga$ on $\ov{W^{f_0}}$ 
by Lemma~\ref{lmm:equivalent}. 
Let $\mathcal{F}$ be the set of loops which are obtained from $f_0$ by iterating use of the following 
construction of $g$ 
from a given $f$ such that $f(s) = f(t)\in D^*$ and $W^{f|[s,t]}$ is closed under $\Ga$ let 
\begin{itemize}
\item[(1)] $g = f|[s,t]$; or
\item[(2)] $\op{dom}(g) = \op{dom}(f)$ and $g(u) = f(s)=f(t)$ for $u\in [s, t]$ and 
$g(u) = f(u)$ otherwise.  
\end{itemize}
In the either cases $W^g$ is closed under $\Ga$. We show that $\mathcal{F}$ is admissible, 
which implies that $f_0$ is null-homotopic by Lemma~\ref{lmm:main1}. 
 
To show the first property for $\mathcal{F}$ to be admissible there are two procedures which we iterate alternatively finitely many times. 
For a given $\de >0$ choose $\de _0>0$ so that $|u-v|<\de _0$ implies $\rho (f_0(u),f_0(v))<\de /2$. 
We remark that $|u-v|<\de _0$ implies $\rho (f(u),f(v))<\de /2$ for every $f\in \mathcal{F}$. 

Our induction is on $m$ such that the size of $f$ is less than $m\de _0$. 
Let $m=2$. Since $f(a)=f(b)$, the size of $f$ is less that $\de $. Let 
$H(s,0) = H(a,t) = H(b,t) = f(a)=f(b)$ and $H(s,1) = f(s)$. Then $H:\partial ([a,b]\times [0,1]) \to E(X,D)$ 
is a desired punctured homotopy. 

Our induction hypothesis is that, if the size of $f$ is less than $m\de _0$ we have 
a puctured homotopy $H$ from $f$ to constant such that the sizes of $H(\partial R_i)$ are less than 
$\de$, where $R_i$ are the related rectangles. 

The first procedure is taking the maximal pairwise disjoint set of all maximal $\Ga$-bounds 
$\{ \langle u_i,v_i\rangle \, | \, i\in I\}$. 
Since the range of $r\circ f$ is contained in $X$,  $r\circ f$ is null-homotopic. 
We let $H(s,1/2) = r (f(s))$ and $H| [a,b]\times [0, 1/2] $ to be a homotopy to the constant $f(a) = f(b)$. 
For $t\in [1/2, 1]$ and $s\notin \bigcup _{i\in I}\langle u_i,v_i\rangle $, let 
$H(s, t) = f(s)$ and for $s\in \langle u_i, v_i\rangle$,  
$H(s,1) =f(s)$, but $H(s,t)$ is not defined for $t\in [1/2, 1]$. 
Consequently, $H\, |\, \partial ([u_i,v_i]\times [1/2, 1])$ is a punctured homotopy bound for constant.  
If the sizes of $f|\langle u_i,v_i\rangle$ are less than $\de$, this is a desired punctured homotopy. 
Otherwise, we extend $H$ on each $\langle u_i,v_i\rangle \times [1/2,1]$ where 
 the sizes of $f|\langle u_i,v_i\rangle$ are not less than $\de$, using the second procedure. 

The second procedure is in case $\langle a,b\rangle$ is a maximal $\Ga$-bound and 
the size of $f$ is equal to or greater than $\de$. 
There are $\{ \langle u_0,v_0\rangle , \langle u_1,v_1\rangle \} \in \Ga$ such that $u_0 <a+\de _0$ and 
$b-\de _0 <v_1$ by the assumption. 

\noindent
(Case 0): 
Suppose that there exists such a pair satisfying 
$a+\de _0\le v_0$ or $u_1\le b-\de _0$. Remark that such a pair is unique, if it exists, and $f(v_0)=f(u_1)$ and $W^{f|[v_0,u_1]}$ is closed under $\Ga$. Since 

\qquad\qquad 
$u_1-v_0\le b-a -\de _0 \le (m+1)\de _0-\de _0=m\de _0, $

\noindent
we have a punctured homotopy $G_0$ from $f|[v_0,u_1]$ to constant having holes less than $\de$. 
Since $\{ \langle u_0,v_0\rangle , \langle u_1,v_1\rangle \} \in \Ga$, we have a homotopy $G_1: [v_0,u_1]\times [1/2,1]
\to E(X,D)$ 
from $f_1: [u_0, v_1] \to E(X,D)$ to constant, where $f_1(s) = f(s)$ for $s\in [u_0, v_0] \cup [u_1, v_1]$ 
and $f_1(s) = f(v_0) = f(u_1)$ for $s\in [v_0,u_1]$. 
Let $f_2(s) = f(s)$ for $s\in [a, u_0] \cup [v_1,b]$, and $f_2(s) = f(u_0)=f(v_1)$ for $s\in [u_0, v_1]$. 
Since $f(a)=f(b)$, and the sizes of $f|[a,u_0]$ and $f|[v_1,b]$ are less than $\de /2$, 
the size of $f_2$ is less than $\de$. We define $G_2: \partial ([a,b]\times [0,1/4]) \to E(X,D)$ to be a punctured 
homotopy from $g_2$ to constant. As a whole, we define 

$H| [v_0,u_1]\times [1/2,1] = G_0$ and $H(s,t) = f(s)$ for $s\in [a, v_0]\cup [u_1, b]$, and $t\in [1/2, 1]$; 

$H| [u_0,v_1]\times [1/4,1/2] = G_1$ and $H(s,t) = f(s)$ for $s\in [a,u_0]\cup [v_1, b]$, and $t\in [1/4, 1/2]$; and 

$H|[a,b]\times [0,1/4] =G_2$. Then, $H$ is a desired punctured homotopy. 

\noindent
(Case 1) Otherwise: 
Let 
\begin{eqnarray*}
u^* &=&\inf \{ v_1\, |\, \{ \langle u_0,v_0\rangle , \langle u_1,v_1\rangle \} \in \Ga \mbox{ for some }
u_0, v_0, u_1\}; and \\
v^* &=& \sup \{ u_0\, |\, \{ \langle u_0,v_0\rangle , \langle u_1,v_1\rangle \} \in \Ga \mbox{ for some } v_0, u_1, v_1\}.
\end{eqnarray*}
Then 
$W^{f|[u^*,v^*]}$ is closed under $\Ga$. Take all maximal $\Ga$-bounds 
$\langle u_i,v_i\rangle  ( i\in I) $ such that $u^* \le u_i$ and $v_i\le v^*$. 
If $u_i < a+\de _0$, then $v_i\le b-\de _0$ and if $ b-\de _0<v_i$, then $a+\de _0\le u_i$. 
Therefore, for each $i\in I$ we have $v_i-u_i\le m\de _0$ and by the induction hypothesis 
a punctured  homotopy $H_i:[u_i, v_i]\times [1/2,1]\to E(X,D)$ whose holes are less than $\de$. 

Let $G_0[u_i,v_i]\times [1/2,1] = H_i$ for $i\in I$ and $G_0(s,t)=f(s)$ for $s\in [u^*,v^*]\sm \bigcup _{i\in I} 
\langle u_i,v_i\rangle$. 
Next, define $f_1:[u^*,v^*]\to X$ by $f_1 = f(u_i)=f(v_i)$ for $s\in [u_i, v_i]$ and 
$f_1(s)=f(s)$ for $s\in [u^*,v^*]\sm \bigcup _{i\in I} \langle a_i,b_i\rangle$. 
Since $f_1(u^*) = f_1(v^*)$, 
we have a homotopy $G_1:[u^*,v^*]\times [1/4,1/2]$ from $g_1$ to constant. 
As in Case 0, let $f_2(s) = f(s)$ for $s\in [a, u^*]\cup [v^*, b]$, and $f_2(s) = f(u^*)=f(v^*)$ for $s\in [u^*,  v^:*]$. 
Since $u^*-a\le \de _0$ and $b-v^*\le \de _0$ and $f(a)=f(b)$, the size of $f_2$ is less than $\de$. 
The hole of the punctured homotopy $G_2: \partial ([a,b]\times [0,1/4]) \to E(X,D)$ from $f_2$ to constant is less than $\de$. We define 

$H| [u^*,v^*]\times [1/2,1] = G_0$ and $H(s,t) = f(s)$ for $s\in [a, u^*]\cup [v^*, b]$, and $t\in [1/2, 1]$; 

$H| [u^*,v^*]\times [1/4,1/2] = G_1$ and $H(s,t) = f(s)$ for $s\in [a, u^*]\cup [v^*, b]$, and $t\in [1/4, 1/2]$; and 

$H|[a,b]\times [0,1/4] =G_2$. Then, $H$ is a desired punctured homotopy. 
Now, we have shown the first property for $\mathcal{F}$ to be admissible. 

To show the second property, let $\vep >0$ be given. Then, by the local simle-connectivity of 
$X$ we have $\de >0$ such that $\de < \vep /3$ and every 
loop with base point $x$ in the $\de$-neighborhood of $x$ in $X$ for $x$ in the range of 
$r\circ f_0$ is 
homotopic to the constant in the $\vep /3$ neighborhood of $x$. 
Let $f\in \mathcal{F}$ be a loop whose size is less $\de$ and $\al >0$ be given. 
We choose $\de _0>0$ as in the preceding proof of the first property for $\min (\de , \al )$ instead of 
$\de$. 
Then, tracing the preceding proof we see that the range of $H$ is in the $\vep /3$-neighborhood of 
the range of $f$. Since the size of $f$ is less than $\vep /3$, the size of $H$ is less than $\vep$. 
The other properties are satisfied as in the preceding proof. 
Now we complete the proof. 
\end{proof}
\begin{cor}\label{cor:reducedword} 
Let $X$ be a simply connected, locally simply connected metric space, 
 $x\in X$, $D$ a countable dense subset of $X$ and $f$ a proper path in $E(X,D)$. 

Then there exists a proper path $g$ in $E(X,D)$ such that $W^g$ is the reduced word of $W^f$ and 
$g$ is homotopic to $f$. 
\end{cor}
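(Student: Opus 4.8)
The plan is to build the desired path $g$ explicitly, by projecting away exactly the cancellable windings, and then to deduce $f\simeq g$ by a single appeal to Theorem~\ref{thm:main1}. First I would fix a maximal noncrossing inverse pairing $\Ga$ on $W^f$ and set $\ov{W_0}=\{\al\in\ov{W^f}:\al\notin\bigcup\Ga\}$, $W_0=W^f\,|\,\ov{W_0}$; by Lemma~\ref{lmm:reducedword1} this $W_0$ is reduced and equivalent to $W^f$, i.e. $W_0=W^f$ as group elements. Writing $f^{-1}(E(X,D)\setminus X)=\bigcup_{i<\nu}\langle a_i,b_i\rangle$ and $J=\{i:\langle a_i,b_i\rangle\in\ov{W_0}\}$ for the unpaired windings, I set $K=[a,b]\setminus\bigcup_{i\in J}\langle a_i,b_i\rangle$ and define $g$ by $g\,|\,[a_i,b_i]=f\,|\,[a_i,b_i]$ for $i\in J$ and $g\,|\,K=r\circ f\,|\,K$. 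Since $r$ collapses each circle $C_d$ to $d$, every paired winding (which lies in $K$) is flattened while each unpaired winding is retained, so $g^{-1}(E(X,D)\setminus X)=\bigcup_{i\in J}\langle a_i,b_i\rangle$ with the same windings and signs as $f$; that is, $W^g\equiv W^f\,|\,\ov{W_0}=W_0$.

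Next I would check that $g$ is a genuine proper path. On windings $g=f$, and on $K$ the map $g=r\circ f$ is continuous because $r$ is $1$-Lipschitz for $\rho$; the two descriptions agree at the boundary points $a_i,b_i$, since there $f(a_i)=f(b_i)\in X$ and $r$ fixes $X$. The one delicate point is continuity at a parameter $s^*$ at which infinitely many windings accumulate: such an $s^*$ lies in $K$, and because the circle diameters satisfy $\op{diam}(C_{d_n})<1/n$, the accumulating windings have image-diameters tending to $0$, forcing $f(s^*)=\lim f(a_{i_k})\in X$ (as $X\times\{o\}$ is closed in $E(X,D)$); hence $g(s^*)=r(f(s^*))=f(s^*)$ and $g$ is continuous there. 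That $g$ is proper is then immediate, its endpoints lying in $X$ and its only windings being the $\langle a_i,b_i\rangle$ inherited from $f$.

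Finally I would produce the homotopy. Since $f$ and $g$ share the same endpoints in $X$, the concatenation $f\cdot g^{-}$ is a proper loop (replaced, if one wishes, by a proper loop via Lemma~\ref{lmm:proper}), and its word is the concatenation $W^f\ast(W^g)^{-}$; as reversing a path inverts its word, this represents $[W^f]\,[W_0]^{-1}=[W^f]\,[W^f]^{-1}=e$. Thus $W^{f\cdot g^{-}}=e$, so by Theorem~\ref{thm:main1} the loop $f\cdot g^{-}$ is null-homotopic, which gives $f\simeq g$ relative to the endpoints, as required. In this route essentially all of the topological difficulty is absorbed into Theorem~\ref{thm:main1}; the only obstacle internal to the corollary is the continuity verification for $g$ at the accumulation points of the windings, which is exactly where the metric structure of $E(X,D)$ is used.
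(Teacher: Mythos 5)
Your proposal is correct and follows essentially the same route as the paper: choose a maximal noncrossing inverse pairing, obtain the reduced word $W_0$ from Lemma~\ref{lmm:reducedword1}, construct a proper path $g$ with $W^g\equiv W_0$, and deduce that $g$ is homotopic to $f$ from $W^{fg^-}=e$ via Theorem~\ref{thm:main1}. The only real difference is in the construction of $g$: the paper groups the paired letters into maximal cancellable subwords $X_i$ and collapses the corresponding parameter intervals $[u_i,v_i]$ of $f$ to the constants $f(u_i)=f(v_i)$, whereas you flatten each paired winding individually by the retraction $r$ while keeping all of $f$'s travel inside $X$ --- both constructions produce a proper path realizing $W_0$, and your continuity verification at accumulation points of windings addresses a point the paper leaves implicit.
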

\begin{proof}
We have a proper path $f: [a,b]\to E(X,D)$ such that $W^f\equiv W$. 
We choose a maximal noncrossing inverse pairring $\Ga$ on $W^f$, which can be gotten by a straightforward induction using the axiom of choice. 
By Lemma~\ref{lmm:reducedword1} we have the reduced word $W_0$ of $W$ using $\Ga$. 
Since we need to define a path, we describe this procedure more precisely. 
We have subwords $X_i$ of $W$ such that $X_i = e$, $\ov{X_i}\cap \ov{X_j} =\emptyset$ 
for $i\neq j$, $\Ga |\ov{X_i}$ is a noncrossing inverse pairring and $\ov{W_0} = \ov{W}\sm \bigcup _i \ov{X_i}$.  
Remark that $\bigcup (\Ga |\ov{X_i})$ is a subset of $[a,b]$. We let $u_i =\inf \bigcup (\Ga |\ov{X_i})$ 
and $v_i =\sup \bigcup (\Ga |\ov{X_i})$. Then $W^{f|[u_i,v_i]} \equiv X_i$. Therefore, each 
$f|[u_i,v_i]$ is nullhomotopic by Theorem~\ref{thm:main1}. 
Define 
\[
g(x) = \left\{
\begin{array}{ll}
f(u_i) = f(v_i), & 
\text{for } x\in \langle u_i, v_i \rangle ; \\[0.1cm]
f(x),&
\text{otherwise}. 
\end{array}
\right.
\]
Then, we see $W^g\equiv W_0$ and consequently $W^f = W^g$. 
The last equation implies that $W^{fg^-} = W^f (W^g)^- = e$. 
Theorem~\ref{thm:main1} implies that $fg^-$ is homotopic to the constant map and hence 
$g$ is homotopic to $f$. 
\end{proof}

We recall that a homomorphism $h:\pi _1(X,x)\to \pi _1(Y,y)$ is a spatial homomorphism, 
if there exists a continuous map $f:X\to Y$ such that $f(x)=y$ and $f_*=h$ \cite{E:spatial}. 
\begin{thm}\label{thm:main2}
Under the same condition as that in Theorem~\ref{thm:main1}, 
for each homomorphism $h:\pi _1({\mathbb H},o) \to \pi _1(E(X,D),x)$ 
there exist a point $x^*\in X$, a path $p$ from $x^*$ to $x$ 
and a spatial homomorphism $\ov{h}:\pi _1({\mathbb H},o) \to \pi _1(E(X,D),x^*)$ 
such that $h = \vp _p \cdot\ov{h}.$  
If the image of $h$ is not finitely generated, $x^*$ and $\ov{h}$ are 
unique and $p$ is unique up to homotopy relative to the end points. 
\end{thm}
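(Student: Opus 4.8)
The plan is to pass to words via Theorem~\ref{thm:main1} and read off the geometry of $h$ from the reduced words of the images of the standard generators. Write $\pi _1(\mathbb H,o)=\pmc{$\times$}\, \, \; ^{\sigma}_{n<\om}\mathbb Z_n$ with standard generators $x_n$, and for each $n$ let $W_n\in\mathcal W(D)$ be the reduced word with $[W_n]=\si _*(h(x_n))$, obtained from a proper loop by Corollary~\ref{cor:reducedword} and Lemma~\ref{lmm:reducedword1}. Since $\si _*$ is injective (Theorem~\ref{thm:main1}), $h$ is faithfully recorded by $\si _*\circ h$, and the structural input I would exploit is that for every integer sequence $(e_n)$ the element $\prod _n x_n^{e_n}$ lies in the domain, so $h(\prod _n x_n^{e_n})$ is again a genuine word in $\mathcal W(D)$; comparing it with the $W_n$ via the $\si$-product homomorphism calculus of \cite{E:free} shows that the reduced word of $h(\prod _n x_n)$ is the reduction of the concatenation $W_0W_1W_2\cdots$.

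First I would extract a common route. Comparing the reduced words $W_n$, I would locate a stable common head $P$: a (possibly infinite) word which, after discarding finitely many indices, is a head of every $W_n$ and whose inverse $P^{-}$ is the corresponding tail, chosen maximal so that the core $V_n$ determined by $W_n\equiv PV_nP^{-}$ is reduced. The existence of such a $P$ is exactly what convergence of the concatenation forces: since $W_0\cdots W_k\equiv PV_0\cdots V_kP^{-}$, the reduced word of $h(\prod _n x_n)$ is $P(\prod _n V_n)P^{-}$, and for this to be a legitimate element each letter of $D$ may occur only finitely often in $\prod _n V_n$. Reading this cancellation through the noncrossing inverse pairing of Lemma~\ref{lmm:equivalent} shows that the overlap between consecutive $W_n,W_{n+1}$ must consume essentially all of the tail $P^{-}$ of $W_n$ against the head $P$ of $W_{n+1}$, leaving cores $V_n$ whose supports shrink.

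Next I would realize $P$ geometrically. Because the tail of $P$ winds through circles $C_d$ attached at points of $D$ that must converge in order for any realizing proper path to be continuous, and because the range of such a path is compact, the head $P$ is realized by a proper path terminating at a well-defined point $x^{*}\in X$. Setting $V_n=P^{-}W_nP$ I would then verify, again through the pairing, that $\op{supp}(V_n)$ accumulates only at $x^{*}$ and that a proper loop realizing $V_n$ at $x^{*}$ has $\op{diam}\to 0$. This lets me build a continuous map $g:\mathbb H\to E(X,D)$ with $g(o)=x^{*}$ sending each circle to such a shrinking loop; $g$ is continuous at $o$ precisely because the cores shrink to $x^{*}$, so $\ov h:=g_*$ is a spatial homomorphism. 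Taking $p$ to be the proper path from $x^{*}$ to $x$ with $W^{p}\equiv P^{-}$ (so that $p^{-}$ realizes $P$), one computes on generators $\vp _p\cdot\ov h(x_n)=[PV_nP^{-}]=[W_n]=h(x_n)$; since both $h$ and $\vp _p\cdot\ov h$ are the standard homomorphisms determined by these word data, they agree on all of $\pi _1(\mathbb H,o)$, giving $h=\vp _p\cdot\ov h$.

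For uniqueness, suppose $\op{Im}(h)$ is not finitely generated, so infinitely many cores $V_n$ are nontrivial. Then the loops $\ov h(x_n)$ are genuine small loops accumulating at a single point, which must be $x^{*}$; any competing decomposition $h=\vp _{p'}\cdot\ov h'$ with $\ov h'$ spatial based at $y^{*}$ forces the same accumulation point, so $y^{*}=x^{*}$ and $\ov h'=\ov h$, while $p'$ and $p$ induce the same change of basepoint and are therefore homotopic relative to the end points. I expect the main obstacle to be the second paragraph: proving that a single stable head $P$ exists and that the residual cores genuinely shrink to one point $x^{*}$. This is the technical heart, where the noncrossing-pairing calculus of Lemma~\ref{lmm:equivalent} and Lemma~\ref{lmm:reducedword1}, combined with the convergence of the infinite products, must be pushed to control the cancellation pattern of $W_0W_1\cdots$ completely.
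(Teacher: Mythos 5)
Your proposal founders on the step that everything else depends on. You assert that, by a ``$\si$-product homomorphism calculus,'' the reduced word of $h(\prod_n x_n)$ is the reduction of the concatenation $W_0W_1W_2\cdots$ of the reduced words $W_n$ of the $\si_*(h(x_n))$. This does not follow from $h$ being a homomorphism: an abstract homomorphism respects only finite products, so the only a priori information is $\si_*\circ h(\prod_n x_n)=W_0\cdots W_k\cdot \si_*\circ h(\prod_{n>k}x_n)$ for each finite $k$. The assertion that $\si_*\circ h$ interacts with infinite products in the way you need --- equivalently, that it is conjugate to a standard (letterwise) homomorphism --- is exactly the hard classification theorem that the paper does not reprove but imports as \cite[Lemma 2.9]{E:edge}. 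Moreover, your claim is false as literally stated. Take $\ov h$ letterwise with $\ov h(x_n)=V_n$ and set $h(x)=g^{-1}\ov h(x)g$ for a fixed nontrivial $g$: then $h(\prod_n x_n)=g^{-1}(V_0V_1\cdots)g$ ends with the letters of $g$, whereas reducing the telescoping concatenation $(g^{-1}V_0g)(g^{-1}V_1g)\cdots$ cancels every interior $gg^{-1}$ and yields $g^{-1}V_0V_1\cdots$ with no trailing $g$. The ``conjugator at infinity'' is precisely what is lost, and indeed your own formula $P(\prod_n V_n)P^-$ is inconsistent with the premise you derive it from (the naive reduction has no trailing $P^-$). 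The extraction of the stable head $P$ and the shrinking of the cores, which you yourself defer as ``the technical heart,'' is therefore not a refinement of your argument but the entire missing proof; it cannot be obtained by pushing Lemma~\ref{lmm:equivalent} and Lemma~\ref{lmm:reducedword1}, which are statements about single words, not about homomorphisms.

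A second, smaller gap: the theorem asserts the existence of $x^*$, $p$ and $\ov h$ for \emph{every} $h$, including those with finitely generated image (e.g. $h$ killing almost all generators). In that case only finitely many $W_n$ are nontrivial, no accumulation point is forced, and your construction of $x^*$ degenerates; a separate argument is needed. The paper treats this case by building the spatial homomorphism directly and invoking \cite[Lemma 2.5]{E:embed} to conclude $h=\ov h$ --- itself a nontrivial rigidity fact, since a homomorphism on $\pi_1(\mathbb H,o)$ is not in general determined by its values on the generators. If you replace your second paragraph by a citation of \cite[Lemma 2.9]{E:edge} (which also supplies the uniqueness of the conjugator that your final paragraph needs) and add the finitely generated case, the rest of your argument --- realizing the standard homomorphism by a continuous map $\mathbb H\to E(X,D)$ and identifying $x^*$ as the unique accumulation point of $\bigcup_n\op{supp}(W_n)$ --- is sound and coincides with the paper's proof.
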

\begin{proof}
By Theorem~\ref{thm:main1} we have a homomorphism $\si _*:\pi _1(E(X,D))\to 
\pmc{$\times$}\, \, \; ^{\sigma}_{d\in D}\mathbb{Z}_d$, i.e. 
$\si _*([f]) = [W^f]$ for a loop $f: \mathbb{I}\to E(X,D)$. 
Then, by \cite[Lemma 2.9]{E:edge}, $\si ^*\circ h$ is conjugate to a standard homomorphism $\ov{h}$, i.e. 
there exists $u\in \pmc{$\times$}\, \, \; ^{\sigma}_{d\in D}\mathbb{Z}_d$ such that 
$\si ^* \circ h(x) = u^{-1}\ov{h}(x)u$ for all $x\in \pi _1({\mathbb H},o)$ and $\ov{h}(W) = V$ where 
$\ov{V} = \ov{W}$ and $V(\al ) = \ov{h}(W(\al ))$ for $\al \in \ov{W}$ (see \cite{E:free}). 

If $h(\de _n)$ is trivial for almost all $n$, then we easily have a 
a spatial homomorphism $\ov{h}:\pi _1({\mathbb H},o) \to \pi _1(E(X,D),x^*)$ 
such that $\ov{h}(\de _n) = h(\de _n)$ for every $n$. 
Then 
\cite[Lemma 2.5]{E:embed} implies that $h = \ov{h}$. In this case 
the image of $\ov{h}$ is finitely generated. 
Otherwise, i.e. 
$h(\de _n)$ is non-trivial for infinitely many $n$, then 
the above conjugator $u$ is unique \cite[Lemma 2.9]{E:edge}.  

Though $E(X,D)$ is neither a space in \cite[Section 3]{E:edge} nor one-dimensional, we easily see that a standard homomorphism 
is induced from a continuous map from $\mathbb{H}$ to $E(X,D)$. Actually, let $W_n=\ov{h}(\boldsymbol{e}_n)$, where $W_n$ is a reduced word and $\boldsymbol{e}_n$ is a homotopy class of a winding on the 
$n$-th circle, then 
$\ov{h}( \boldsymbol{e}_1\boldsymbol{e}_2\cdots \boldsymbol{e}_n\cdots ) 
=W_1W_2\cdots W_n\cdots $
and we have a loop $f$ in $E(X,D)$ such that 
$W^f = W_1W_2\cdots W_n\cdots $. Therefore, $p$ is gotten as a unique accumulation point of 
$\{ d\in D\; | \; d \mbox{ or }d^- \mbox{ appears in some }W_n\}$. 
\end{proof}

We recall a construction of a space from a group
\cite{ConnerEda:information, ConnerEda:patch}. For a given group $G$ let $X_G$ be the
topological space defined in \cite[Section 2]{ConnerEda:information}. Since this construction  
is not well-known, we explain this slightly and how the local path-connectivity in the assumption of Corollary~\ref{cor:main1}  concerns. For a group $G$, a point of the space $X_G$ is a maximal compatible family of homomorphic images of the Hawaiian earring group in $G$. Here, we admit 
the difference of finitely generated parts and are only interested in uncountable homomorphic images. The convergence of points $x_n\in X_G$ is done as follows. Choosing arbitrary $a_n\in x_n$ modulo finitely generated parts, then collection $\{ a_n\, |\, n<\om \}$ belongs to the limit point. 
By Theorem~\ref{thm:main2} we see an uncountable image of the Hawaiian earring group determines 
a point as one-dimensional case \cite[Proposition 2.1]{ConnerEda:information}. To make $\{ a_n\, |\, n<\om \}$ as a 
homomorphic image of the Hawaiian earring group we need to define a continuous map from the Hawaiian 
earring. For this purpose we can use the local path-connectivity. 
\begin{cor}\label{cor:main1}
In addition to the assumptions for a space $X$ in Theorem~\ref{thm:main1}, 
suppose that $X$ is locally path-connected. Then, 
the space $X_{\pi _1(E(X,D))}$ is homeomorphic to the original space
 $X$. 
\end{cor}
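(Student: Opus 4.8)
The plan is to identify the space $X_{\pi_1(E(X,D))}$ with $X$ by exhibiting a bijection between points of $X$ and the points of $X_{G}$ (maximal compatible families of uncountable homomorphic images of the Hawaiian earring group into $G = \pi_1(E(X,D))$), and then to verify that this bijection is a homeomorphism. First I would associate to each point $x^*\in X$ a natural homomorphic image: since $D$ is dense and $X$ is locally path-connected, near $x^*$ there is a sequence $d_{n_k}\in D$ converging to $x^*$, and the circles $C_{d_{n_k}}$ together with short connecting paths in $X$ to a fixed basepoint assemble into a continuous map $\mathbb H\to E(X,D)$, hence an uncountable homomorphic image of $\pi_1(\mathbb H)$ in $G$. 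The essential input is Theorem~\ref{thm:main2}: every homomorphism $h:\pi_1(\mathbb H)\to G$ whose image is not finitely generated determines a \emph{unique} point $x^*\in X$ (as the accumulation point of the supports of the reduced words $W_n=\overline h(\bs e_n)$, recorded in the final line of that proof) together with a path $p$ unique up to homotopy. This is precisely the mechanism by which an uncountable image ``determines a point,'' mirroring \cite[Proposition 2.1]{ConnerEda:information} in the one-dimensional case.

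Next I would show that the correspondence $x^*\mapsto \{\text{images accumulating at }x^*\}$ is a well-defined bijection onto the point set of $X_G$. For surjectivity, given a maximal compatible family, pick representatives $a_n$ modulo finitely generated parts; by Theorem~\ref{thm:main2} each uncountable $a_n$ pins down a point of $X$, and compatibility within the family forces these points to coincide, yielding the point $x^*$ whose family maps back. For injectivity, two distinct points $x^*\ne y^*$ of $X$ yield incompatible families: the supports of the associated reduced words accumulate at different points of $X$, so no single homomorphic image can be compatible with both (the accumulation point is an invariant extracted purely group-theoretically via $\si_*$ and the word structure). Here I would lean on Corollary~\ref{cor:reducedword} to pass freely to reduced words and on the injectivity of $\si_*$ from Theorem~\ref{thm:main1}, so that the word-combinatorics faithfully reflect homotopy classes.

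The topological part is to match the convergence relation on $X_G$, defined via choosing $a_n\in x_n$ and collecting $\{a_n\mid n<\om\}$ into a homomorphic image belonging to the limit, with ordinary metric convergence in $X$. If $x_n\to x^*$ in $X$, then one can choose supports of the representing words that themselves accumulate at $x^*$; assembling these into a single map $\mathbb H\to E(X,D)$ requires producing a continuous map, and this is exactly where local path-connectivity of $X$ is indispensable (as flagged in the paragraph preceding the corollary): the short paths joining the chosen points $d\in D$ to the basepoint must have diameters shrinking to zero, which local path-connectivity guarantees. Conversely, if the $x_n$ do not converge to $x^*$ in $X$, the supports fail to accumulate at $x^*$ and the collection cannot lie in the family over $x^*$, so $x_n\not\to x^*$ in $X_G$.

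The main obstacle I expect is the fidelity of the convergence-matching in the topological step, rather than the set-level bijection. The definition of convergence in $X_G$ quantifies over \emph{arbitrary} choices of $a_n\in x_n$ modulo finitely generated parts, so I must show the accumulation behavior is independent of these choices and genuinely recovers the metric topology of $X$ in both directions; the delicate direction is constructing, from metrically convergent $x_n$, an honest continuous map out of $\mathbb H$ whose image is the required limiting homomorphic image, where I would invoke local path-connectivity to control path diameters and invoke Theorem~\ref{thm:main2}'s uniqueness to certify that the limit family is the correct one. Once continuity of the bijection and its inverse are established through this correspondence of convergent sequences, the homeomorphism $X_{\pi_1(E(X,D))}\cong X$ follows.
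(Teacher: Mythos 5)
Your proposal is correct and takes essentially the same route as the paper, whose own justification of this corollary is the (largely implicit) explanatory paragraph preceding it: Theorem~\ref{thm:main2} is what makes each uncountable homomorphic image of the Hawaiian earring group determine a unique point of $X$, and local path-connectivity enters exactly where you place it --- assembling chosen images $a_n$ into a single continuous map from $\mathbb{H}$ to $E(X,D)$ so that the convergence structure of $X_{\pi_1(E(X,D))}$ matches the metric topology of $X$. Your write-up merely fleshes out the set-level bijection and convergence-matching that the paper leaves to the machinery of \cite{ConnerEda:information}.
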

Now, we have pairwise non-isomorphic subgroups of the Hawaiian earring group $\pi _1(\mathbb{H})$.   
\begin{cor}\label{cor:subgroup}
Under the same condition as in Corollary~\ref{cor:main1}, 
$\pi _1(E(X,D))$ is isomorphic to a subgroup of $\pi _1(\mathbb{H})$, which are non-isomorphic if  spaces 
$X$ are not homeomorphic. 
\end{cor}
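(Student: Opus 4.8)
The plan is to read this corollary off from the two principal results already proved. For the subgroup assertion, Theorem~\ref{thm:main1} supplies an injective homomorphism $\si _*:\pi _1(E(X,D),x)\to \pmc{$\times$}\, \, \; ^{\sigma}_{d\in D}\mathbb{Z}_d$. Because $D$ is countable, the target is the free $\sigma$-product of countably many copies of $\mathbb{Z}$, which is isomorphic to the Hawaiian earring group $\pi _1(\mathbb{H})$ (as recalled in the introduction and in \cite{E:free}). Injectivity then identifies $\pi _1(E(X,D))$ with its image $\si _*(\pi _1(E(X,D)))$, a subgroup of $\pi _1(\mathbb{H})$. Nothing further is needed here: the whole homotopy-theoretic content, namely that $W^{f}=e$ forces $f$ to be null-homotopic, has already been discharged inside the proof of Theorem~\ref{thm:main1}.

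For the non-isomorphism assertion I would argue by contraposition, exploiting that the reconstruction $G\mapsto X_G$ is an invariant of the abstract isomorphism type of $G$. Suppose $X$ and $Y$ both satisfy the hypotheses of Corollary~\ref{cor:main1}, with countable dense subsets $D\subseteq X$ and $D'\subseteq Y$, and suppose there is a group isomorphism $\pi _1(E(X,D))\cong \pi _1(E(Y,D'))$. Since $X_G$ is defined purely group-theoretically --- as the set of maximal compatible families of uncountable homomorphic images of $\pi _1(\mathbb{H})$ in $G$, topologized by the convergence described before Corollary~\ref{cor:main1} --- any isomorphism $G\cong G'$ carries such families bijectively and preserves both compatibility and convergence, hence induces a homeomorphism $X_G\cong X_{G'}$. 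Applying this to the isomorphism above yields $X_{\pi _1(E(X,D))}\cong X_{\pi _1(E(Y,D'))}$. By Corollary~\ref{cor:main1} the left side is homeomorphic to $X$ and the right side to $Y$, so $X$ and $Y$ are homeomorphic. The contrapositive is exactly the stated claim.

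The corollary is thus essentially formal, and I expect no genuine obstacle at this last step; all the difficulty sits upstream, in the injectivity of $\si _*$ (Theorem~\ref{thm:main1}) and in the reconstruction homeomorphism (Corollary~\ref{cor:main1}), both of which we are entitled to invoke. The single point deserving explicit verification is the claim that $G\mapsto X_G$ depends only on the isomorphism class of $G$; this follows directly from the group-theoretic definition of $X_G$, but it is worth stating cleanly, since it is precisely what converts the \emph{reconstruction} of $X$ into a \emph{rigidity} statement for the groups. I would also note, for the record, that the invocation of Corollary~\ref{cor:main1} silently uses local path-connectivity of the spaces, which is why that hypothesis is carried along in the standing assumptions.
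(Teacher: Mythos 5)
Your proposal is correct and follows exactly the route the paper intends: the paper gives no separate proof for this corollary because it is the formal combination of Theorem~\ref{thm:main1} (injectivity of $\si_*$ into $\pmc{$\times$}\, \, \;^{\sigma}_{d\in D}\mathbb{Z}_d \cong \pi_1(\mathbb{H})$ for countable $D$) with Corollary~\ref{cor:main1} (reconstruction $X \cong X_{\pi_1(E(X,D))}$), read contrapositively via the isomorphism-invariance of $G\mapsto X_G$. Your explicit remarks on that invariance and on the role of local path-connectivity are exactly the right points to flag, and nothing is missing.
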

\begin{rmk}
The local simple-connectivity in the assumption of Theorems~\ref{thm:main1} is essential. We show the existence of a simply-connected space $X$ such that $\pi _1(E(X,D))$ contains a subgroup isomorphic to the rational group $\mathbb{Q}$ for any countable dense subset $D$ of $X$. Since any non-zero abelian subgroup of $\pmc{$\times$}\, \, \; ^{\sigma}_{d\in D}\mathbb{Z}_d$ is isomorphic to $\mathbb{Z}$, the conclusion of Theorem~\ref{thm:main1} does not hold. 
Let $C\mathbb{H}$ be the cone over 
$\mathbb{H}$ which is the quotient space of $\mathbb{H}\times [0,1]$ by collapsing $\mathbb{H}\times \{ 1\}$ to one point $*$. Obviously $C\mathbb{H}$ is simply-connected. 
Choose a countable dense subset $D$ of $C\mathbb{H}$ and also $d_n\in D$ 
so that $d_n$ converge to $(o,0)$. Apparently $C\mathbb{H}$ is simply-connected. 
Let $\it{l}_n$ be essential loops in $C\mathbb{H}\sm \{ *\}$ whose ranges converge to $(o,0)$ and 
$d_n\in D\cap \mathbb{H}\times [0,1/2]$ converge to $(o,0)$. Choose paths $p_n$ from $d_n$ to $(o,0)$ whose ranges converge to $(o,0)$. 
We construct concatenations of loops $p_nw_{d_n}p_n^-{\it l_n}p_n^-w_{d_n}^-p_n$, which 
is a typical construction in wild topology \cite{E:union, E:free}. 

Let $Seq$ be the set of finite sequences of nonnegative integers.  
For $s\in Seq,$ the length of $s$ is denoted by $lh(s)$ and $s=(s(0),\cdots ,s(n-1)),$ where $n = lh(s).$  
The sequence obtained by adding $i$ to $s$ is denoted by $s*(i).$  
Let $\Sigma = \{s\in Seq \; : \; s(i)\le i \; \text{for} \; i<lh(s) \; \text{and} \; lh(s) \ge 1\}.$  
For $s\in \Sigma,$ define numbers $a_s$ and $b_s$ inductively as follows:
\[
\left\{ 
\begin{array}{ll}
a_{(0)} = 0, \quad a_{(1)} =\displaystyle{\frac{1}{2};} &{}\\[0.2cm]
\displaystyle{b_s = a_s + \frac{1}{2^n}\cdot \frac{1}{(n+1)!}}, & \text{where} \; n=lh(s); \\[0.3cm]
 \displaystyle{a_s = b_t +\frac{1}{2^{n-1}}\cdot \frac{1}{n!}\cdot \frac{i}{n+1}}, & \text{where} \; n=lh(s), t*(i)=s.
\end{array}
\right.
\]
Define $f:[0,1] \to E(C\mathbb{H}, D)$ by:
\[
\left\{
\begin{array}{ll}
f|[a_s,b_s] \equiv p_nw_{d_n}p_n^-{\it l_n}p_n^-w_{d_n}^-p_n  & 
\text{for }s\in \Sigma \text{ with } lh(s)=n, \\[0.1cm]
f(\alpha)= (o,0) &
\text{for }\alpha \notin \bigcup _{s\in \Sigma} [a_s,b_s]. 
\end{array}
\right.
\]
Here, $f\equiv g$ for two loops $f$ and $g$ means the existence of order-homeomorphism $\vp$ between $\op{dom}(f)$ and $\op{dom}(g)$ such that $f(s) = g(\vp (s))$. 
 Since the ranges of $p_nw_{d_n}p_n^-{\it l_n}p_n^-w_{d_n}^-p_n$ converge to $(o,0),$ $f$ is a loop with basic point $(o,0).$
Define $f_n:[0,1]\to E(C\mathbb{H}, D)$ by: 
\[
\left\{
\begin{array}{ll}
f_n|[a_s,b_s] \equiv f|[a_s,b_s] & 
\text{for }s\in \Sigma \text{ with } lh(s)\ge n, \\[0.1cm]
f(\alpha)= (o,0) &
\text{for }\alpha \notin \bigcup _{s\in \Sigma , lh(s)< n} [a_s,b_s]. 
\end{array}
\right.
\]
Since each  $p_nw_{d_n}p_n^-{\it l_n}p_n^-w_{d_n}^-p_n$ is null-homotopic, $f_n$ are homotopic to $f$. 
To show that $f$ is an essential map by contradiction, we suppose that $f$ is null-homotopic, i.e. 
$H:[0,1]\times [0,1]\to E(C\mathbb{H},D)$ satisfies $H(s,1) = f(s), H(0,t) = H(1,t)=H(s,0)=(o,0)$ for $s,t\in [0,1]$. 
Consider $H^{-1}(E(C\mathbb{H},D) \sm C\mathbb{H})$, which becomes a disjoint union of $H^{-1}(C_{d_n}\sm 
\{ d_n\}$. Since $w_{d_n}$ and $w_{d_n}^-$ are essential, each interval in $[0,1]\times \{ 1\}$  corresponding to $w_{d_n}$ or $w_{d_n}^-$  is contained in a component 
of $H^{-1}(C_{d_n}\sm \{ d_n\})$ which contains another interval corresponding to $w_{d_n}^-$ or 
$w_{d_n}$ respectively. Each components of the complement of $H^{-1}(C_{d_n}\sm \{ d_n\})$ intersects 
with $H^{-1}(\{ *\})$, since every ${\it l}_m$ is essential. Therefore, every component 
of the complement of $H^{-1}(\bigcup _{d\in D}C_d\sm D)$ intersects with  $H^{-1}(\{ *\})$. 
Since there are infinitely many such components, we have an accumulation point $(u,v)$. Then, $H(u,v) 
= *$. On the other hand, since $(u,v)$ is also an accumulation point of  $H^{-1}(C_{d_n}\sm \{ d_n\})$'s, $H(u,v)$ belongs to $\mathbb{H}\times [0,1/2]$, which is a contradiction. 
Hence $f$ is essential. 

Define $\bs{0}_n\in Seq$ so that $lh(\bs{0}_n) = n$ and $\bs{0}_n (i)=0$. 
Then, we see that $f|[b_{\bs{0}_n}, a_{\bs{0}_{n-1}*(1)}]$ is homotopic to 
 $(f|[a_{\bs{0}_{n+1}}, a_{\bs{0}_n*(1)}])^{n+2}$ and consequently $f|[a_{\bs{0}_n}, a_{\bs{0}_{n-1}*(1)}]$ is homotopic to
 $(f|[a_{\bs{0}_{n+1}}, a_{\bs{0}_n*(1)}])^{n+2}$. 
Since $f|[a_s,b_s]$ depends only on the length $lh(s)$, $\pi _1(E(C\mathbb{H}, D))$ contains a subgroup 
isomorphic to $\mathbb{Q}$. 
\end{rmk}

%

\providecommand{\bysame}{\leavevmode\hbox to3em{\hrulefill}\thinspace}
\providecommand{\MR}{\relax\ifhmode\unskip\space\fi MR }
\providecommand{\MRhref}[2]{%
  \href{http://www.ams.org/mathscinet-getitem?mr=#1}{#2}
}
\providecommand{\href}[2]{#2}

\end{document}